\author{Xiaolin ZENG}
\date{}
\title{How Vertex reinforced jump process arises naturally}
\newtheorem{lem}{Lemma}
\newtheorem{thm}{Theorem}
\newtheorem{defi}{Definition}
\newtheorem{prop}{Proposition}
\newtheorem{rmk}{Remarks}
\def\P{\mathbb{ P}}
\def\E{\mathbb{ E}}
\begin{document}
\maketitle

\begin{abstract}
  We prove that the only nearest neighbor jump process with local dependence on the occupation times satisfying the partially exchangeable property is the vertex reinforced jump process, under some technical conditions (Theorem~\ref{thm1}). This result gives a counterpart to the characterization of edge reinforced random walk given by Rolles~\cite{rolles2003edge}.
\end{abstract}
{\bf Keywords:} Partial exchangeability, Vertex reinforced jump processes.
\section{Introduction}
One of the most remarkable results in probabilistic symmetries is the de Finetti's theorem~\cite{de1931funzione}, which states that the law of any exchangeable sequence valued in a finite state space is in fact a mixture of i.i.d.\ sequences. This theorem has a geometrical interpretation via Choquet's theorem. More precisely, the subspace of exchangeable probabilities forms a convex, and those probabilities given by i.i.d.\ sequences are exactly the extreme points of the convex~\cite{aldous1985exchangeability}.

In the 1920s, W.E. Johnson~\cite{zabell1982we} conjectured that, under some technical conditions, if a process \(X_n\) is exchangeable and \(\P(X_{n+1}=i\vert X_0,\ldots,X_n)\) depends only on the number of times \(i\) occurs and the total steps \(n\), then \(X_n\) is nothing but the famous Polya urn: drawing balls uniformly from an urn and put back one additional ball with same color as the drawn one. This is a process with linear reinforcement. In term of random walk, the natural counterpart of Polya urn is the edge reinforced random walk (ERRW). Diaconis conjectured that this process have the same characterization as Polya urn. In~\cite{rolles2003edge} S.W.W.Rolles have shown that both conjectures are true under technical conditions.

The vertex reinforced jump process (VRJP) is a linearly reinforced process in continuous time. In a recent paper, Sabot and Tarres~\cite{sabot2011edge} have shown that ERRW is a mixture of VRJP, which indicates that the VRJP are building blocks of ERRW, thus should share a similar characterization. This paper gives this characterization (Theorem~\ref{thm1}), as a counterpart of Rolles' result; namely, the only continuous time process which is partially exchangeable and the transition probability depends only on neighbor local times is VRJP, under technical conditions.

\vspace{0.4cm}

Let us first recall the definition of ERRW, let \(G=(V,E)\) be a locally finite undirected graph without direct loops (edges with one endpoint). Let \(Z_n\) denote the location of the random process at time \(n\). Let \(a_e>0\), \(e\in E\). For \(n\in \mathbb{N}\), define \(w_n(e)\), the weight of edge \(e\) at time \(n\), by
\begin{align*}
&w_0(e)=a_e\ \text{for all \(e\in E\) },\\
&w_{n+1}(e)=
\begin{cases}
  w_n(e)+1& \text{for }e=\{Z_n,Z_{n+1}\}\in E,\\
  w_n(e)& \text{for }e\in E\setminus\{\{Z_n,Z_{n+1}\}\}.
\end{cases}\end{align*}
Let \(\P_{v_0}^{(a)}\) denote the probability of the ERRW on \(G\) starting at \(v_0\) with initial weights $a=(a_{e})_{e\in E}$. Then \(\P_{v_0}^{(a)}\) is defined by
\begin{align*}
&Z_0=v_0, \ \P_{v_0}^{(a)}-a.s.,\\
&\P_{v_0}^{(a)}(Z_{n+1}=v|Z_0,\ldots,Z_n)=
\begin{cases}
  \frac{w_n(\{Z_n,v\})}{\sum_{e,Z_n\in e}w_n(e)}& \text{if }\{Z_n,v\}\in E\\
  0 &\text{otherwise.}
\end{cases}\end{align*}

Now let us introduce some definitions before stating Rolles' result.
Again \(G=(V,E)\) is a locally finite undirected graph without direct loops, with its vertex set \(V\) and edge set \(E\). Denote \(i\sim j\) if \(\{i,j\}\in E\). Following Rolles, we call \((Z_n)_{n\geq 0}\) a nearest neighbor random walk on \(G\), if it is a discrete time random process (not necessarily Markov) such that successive positions are neighbors.

An admissible path of the random walk is a sequence of vertices of \(G\), denoted \(\pi=(v_0,v_1,\ldots,v_n)\) such that consecutive vertices are neighbors. The number of visits to  vertex \(i\) of  path \(\pi\) is denoted
\[N_i(\pi):=\#\{k:\ v_k=i,\ k=0,\ldots, n\};\]Similarly, the number of transition counts in the path \(\pi\) of an oriented edge \(e=(i,j)\) is denoted
\[N_e(\pi)=N_{i,j}(\pi):=\#\{k:\ v_k=i,v_{k+1}=j,\ k=0,\ldots,n-1\}.\]
Two paths \(\xi,\eta\) are said to be equivalent and denoted \(\xi \sim \eta\), if \(\xi\) and \(\eta\) start at the same state and the transition counts from \(i\) to \(j\) of any pair \((i,j)\) are equal for \(\xi\) and \(\eta\), i.e. \(N_{i,j}(\xi)=N_{i,j}(\eta)\) for all \((i,j)\).
\begin{rmk}
  Two equivalent paths necessarily end at the same vertex.
\end{rmk}

\begin{defi}
  A nearest neighbor random walk is partially exchangeable if any two equivalent paths have the same probability.
\end{defi}

\begin{thm}[Diaconis \& Freedman~\cite{diaconis1980finetti}]
 Let \(Z_n\) be a recurrent random walk (i.e.\ with probability one it returns to \(Z_0\) infinitely often), then \(Z\) is a mixture of Markov chains if and only if it is partially exchangeable. Moreover, the mixing measure is uniquely determined.
\end{thm}

As it turns out that edge reinforced random walk is a mixture of reversible Markov chains, Rolles introduced the following more restrictive notion of partial exchangeability: for \(\pi=(v_0, \ldots,v_n)\) and \(e=(i,j)\) let
\[\tilde{N}_e(\pi):=\#\{k:\ v_k=i,v_{k+1}=j \text{ or }v_k=j,v_{k+1}=i,\ k=0,\ldots,n-1\}.\]
\begin{defi}
\label{revPE}
  A nearest neighbor random walk is partially exchangeable in a reversible sense if it satisfies the following: for any two paths \(\xi,\eta\), if \(\tilde{N}_e(\xi)=\tilde{N}_e(\eta)\) for all \(e\in E\), then \(\xi\) and \(\eta\) have the same probability.
\end{defi}
In~\cite{rolles2003edge} Theorem 1.1, Rolles proved that if a nearest neighbor random walk is recurrent and partially exchangeable in a reversible sense, then it is a mixture of reversible Markov chain.

\vspace{0.7cm}
\noindent Rolles' main result in~\cite{rolles2003edge} states that, if \(G=(V,E)\) is a strongly connected graph and \(Z_n\) is a nearest neighbor random walk on \(G\) such that the following assumptions are satisfied:
  \begin{enumerate}
  \item \(Z\) is partially exchangeable in a reversible sense (Definition~\ref{revPE}).
  \item  For all \(v\in V\) and \(e\in E\) there exists a function \(f_{v,e}\) taking values in \([0,1]\) such that for all \(n\geq 0\)
\[\P(Z_{n+1}=v|\mathcal{F}_n)=f_{Z_n,e}(N_{Z_n}(Z_0,\ldots,Z_n),\tilde{N}_{Z_n,v}(Z_0,\ldots,Z_n)).\]
  \end{enumerate}
Then \(Z\) is an edge reinforced random walk or a Markov chain under some technical conditions (c.f.~\cite{rolles2003edge} for precision).

\vspace{0.4cm}

Next we define the vertex reinforced jump process \(X_t\). Assign positive weights \((W_e)_{e\in E}\) to the edges, the process \(X_t\) starts at time \(0\) at some vertex \(i_0\), if \(X\) is at vertex \(i\in V\) at time \(t\), then, conditioned on the past, the process jumps to a neighbor \(j\) of \(i\) with rate \(W_{i,j}(1+l_j(t))\),
 where for \(e=\{i,j\}\), \(W_{i,j}=W_e\) and \(l_j(t)\) is the local time of vertex \(j\) at time \(t\):
\[l_j(t):=\int_0^t \mathds{1}_{X_s=j}ds.\]

\begin{thm}[Sabot \& Tarres\cite{sabot2011edge}]
   The ERRW \({Z}_n\) with weights \((a_e)\) is equal in law to the discrete time process associated with a VRJP \(X_t\) in random independent weights \(W_e\sim \operatorname{Gamma}(a_e,1)\)
\end{thm}
\noindent And finally, the VRJP \(X_t\) turns out to be partially exchangeable within a time scale (c.f.\ next section for the definition of partial exchangeability in continuous times). Let \[D(s)=\sum_{i\in V}(l_i(s)^2+2l_i(s)),\] then the process \(Y_t=X_{D^{-1}(t)}\) is a mixture of Markov processes with an explicit mixing measure, in addition, the mixing measure turns out to be related to a \(\sigma\)-model introduced by Zirnbauer, c.f.~\cite{sabot2011edge} Theorem 2.

In this paper we give a counterpart of Rolles' result for VRJP, namely we characterize exchangeable jump processes with local rate functions.
\section{Definitions and results}

\begin{defi}
\label{defi1}
We call \((X_t)_{t\geq 0}\)  a nearest neighbor jump process on \(G\), if it is a random process which is right continuous without explosion, and each jump is from some vertex \(i\) to one of its neighbors \(j\) (i.e. \(i\sim j\)).
\end{defi}

\begin{defi}
  A nearest neighbor jump process \(X_t\) is a  mixture of Markov jump processes if there exists a probability measure \(\mu\) on Markov jump processes such that \(\mathcal{L}(X_t)=\int \mathcal{L}(Y_t) \mu(dY)\), where \(\mathcal{L}\) denotes the law of respective processes. If for \(\mu\) a.s.\ the Markov processes are reversible, then the process \(X_t\) is a mixture of reversible Markov processes.
\end{defi}

\noindent Freedman introduced the notion of partial exchangeability in continuous time in~\cite{freedman1996finetti}.

\begin{defi}[Freedman]
\label{defFreedman}
  A continuous process \(X_t\) is partially exchangeable if for each \(h>0\), the law of \(\{ X_{nh}; n=1,2,\cdots\}\) satisfies the following property:
for any two paths \(\xi=(\xi_0,\ldots,\xi_l),\ \eta=(\eta_0,\ldots,\eta_l)\) such that  \( \xi\sim \eta\ \),
\[ \P(X_0=\xi_0,\ldots,X_{lh}=\xi_l)=\P(X_0=\eta_0,\ldots,X_{lh}=\eta_l).\]
\end{defi}
\noindent We recall the de Finetti's theorem in continuous time showed by Freedman~\cite{freedman1996finetti}.
\begin{thm}
  Let \(X_t\) be a continuous time process starting at \(i_0\in G\), \(X_t\) is mixture of Markov jump processes if
  \begin{enumerate}
  \item \(X_t\) has no fixed points of discontinuity, more precisely, for every \(t\), if \(t_n\rightarrow t\), then \(\P(X_{t_n}\rightarrow X_t)=1\);
  \item \(X_t\) is recurrent;
  \item \(X_t\) is partially exchangeable.
  \end{enumerate}
\end{thm}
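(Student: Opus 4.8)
The plan is to reduce the continuous-time statement to the known discrete-time de Finetti theorem for Markov chains (Diaconis and Freedman) by discretizing time, and then to reassemble the resulting family of mixing measures into a single mixture over continuous-time generators.

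First I would fix $h>0$ and study the skeleton chain $Y^{(h)}_n := X_{nh}$, $n\geq 0$. Hypothesis (3), read through Definition \ref{defFreedman}, says precisely that $Y^{(h)}$ is a partially exchangeable sequence in the discrete sense: its law is invariant under any rearrangement of the string that fixes the initial state and preserves all transition counts $N_{i,j}$. I would then check that $Y^{(h)}$ is recurrent; this should follow from hypothesis (2), since a recurrent non-exploding jump process returns to $i_0$ at arbitrarily large times, although turning occupation at $i_0$ into recurrence of the $h$-sampled chain requires a short measure-theoretic argument. With partial exchangeability and recurrence in hand, the discrete-time de Finetti theorem for Markov chains yields a unique probability measure $\nu_h$ on stochastic matrices with $\mathcal L(Y^{(h)})=\int \mathcal L(Z^M)\,\nu_h(dM)$, where $Z^M$ is the chain with transition matrix $M$ started at $i_0$.

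The heart of the argument is to make the family $(\nu_h)_{h>0}$ consistent and to extract from it a single continuous-time object. I would work along dyadic scales: $Y^{(h)}$ is obtained from $Y^{(h/2)}$ by keeping every second term, and for a Markov chain this sub-sampling replaces the transition matrix $M$ by $M^2$. Uniqueness of the discrete mixing measure then forces $\nu_h = (M\mapsto M^2)_\ast\,\nu_{h/2}$, and iterating gives $\nu_{h}=(M\mapsto M^{2^k})_\ast\,\nu_{h/2^k}$. This compatibility means that, at the level of the mixing parameter, there is a random family of stochastic matrices $(M_t)_{t>0}$ with the semigroup relation $M_sM_t=M_{s+t}$; hypothesis (1), giving continuity in probability and hence $M_h\to I$ as $h\to 0$, upgrades this to a continuous stochastic semigroup. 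Semigroup theory on the countable, locally finite, nearest-neighbor state space $G$ then produces a random conservative generator $Q$, supported on edges of $G$, with $M_t=e^{tQ}$. Disintegrating $\nu_h$ through $Q\mapsto e^{hQ}$ exhibits a single measure $\mu$ on generators, and the absence of explosion built into Definition \ref{defi1} ensures the $\mu$-random generator defines a genuine Markov jump process, identifying $\mathcal L(X)$ as the $\mu$-mixture of such processes started at $i_0$.

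The main obstacle I anticipate is this gluing step: passing from the consistent dyadic family $(\nu_h)$ to an honest measure on generators. Two points require care. First, one must upgrade the algebraic semigroup relation to a strongly continuous semigroup and verify differentiability at $0$ so that $Q$ exists; this is exactly where hypothesis (1) is used, while local finiteness of $G$ and the nearest-neighbor constraint of Definition \ref{defi1} keep $Q$ edge-supported and conservative. Second, one must confirm that $M_h$ is $\nu_h$-almost surely the time-$h$ transition matrix of a non-exploding jump process, so that the embedded jump chain and the exponential holding times reconstruct $X$ on the whole half-line rather than merely on the dyadic grid; recurrence (hypothesis (2)) and the uniqueness of the discrete mixing measures are what make this reconstruction consistent across scales.
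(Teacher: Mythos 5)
You should first be aware that the paper does not prove this statement at all: the theorem is recalled verbatim from Freedman \cite{freedman1996finetti}, so the only comparison available is with Freedman's own argument. Your outline follows the same overall strategy as that source — pass to the skeleton chains \(X_{nh}\), apply the discrete de Finetti theorem for recurrent partially exchangeable sequences to each skeleton, and reassemble the mixing measures across dyadic scales — so the plan of attack is the right one.

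However, there is a genuine gap exactly at the step you yourself flag as the heart of the matter. From uniqueness of the discrete mixing measures you correctly deduce the distributional consistency \(\nu_h=(M\mapsto M^2)_*\,\nu_{h/2}\), but you then assert that this compatibility ``means that there is a random family of stochastic matrices \((M_t)\) with the semigroup relation \(M_sM_t=M_{s+t}\).'' That inference does not hold as stated: the map \(M\mapsto M^2\) is not injective, so a family of laws consistent under pushforward by squaring does not by itself produce a projective limit, i.e.\ a single probability space carrying matrices \(M_{h/2^k}\) with \(M_{h/2^{k+1}}^2=M_{h/2^k}\) almost surely. The way to close this gap — and the way the cited argument effectively works — is to construct the random matrices pathwise rather than distributionally: for a recurrent partially exchangeable sequence the directing transition matrix is recovered as the almost sure limit of empirical transition frequencies, \(M_h(i,j)=\lim_{N\to\infty} N_{i,j}\bigl((X_0,X_h,\dots,X_{Nh})\bigr)\,/\,\#\{n\leq N:\ X_{nh}=i\}\). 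This realizes every \(M_h\) on the same probability space as \(X\), measurable with respect to the exchangeable \(\sigma\)-field, and the almost sure relation \(M_{h/2}^2=M_h\) then follows by conditioning; only after that can your semigroup and generator argument proceed. Two further points you mention only in passing also need real arguments: (i) recurrence of the skeleton is not automatic from recurrence of \(X\), since a path can occupy \(i_0\) at unbounded times while avoiding the grid \(h\mathbb{N}\), and ruling this out (or formulating recurrence so as to exclude it) is part of the work; (ii) hypothesis (1) is a statement about the paths of \(X\), and upgrading it to \(M_h\to I\) almost surely for the random matrices, at every state rather than only at \(i_0\), again requires the pathwise description of \(M_h\), not merely the measures \(\nu_h\).
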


\noindent Our main theorem is:
\begin{thm}
\label{thm1}
Let \(X_t\) be a nearest neighbor jump process on \(G\) satisfying the following assumptions:
\begin{enumerate}
\item For all \(i\in V\), there exists \(\mathcal{C}^2\) diffeomorphisms \(h_i\) such that \(X\) is partially exchangeable within the time scale \(D(s)=\sum_{i\in V}h_i(l_i(s))\);
\item \(G\) is strongly connected (i.e.\ any two adjacent vertices are in a cycle);
\item The process, at vertex \(i\) at time \(t\), jumps to a neighbor \(j\) of \(i\) with rate \(f_{i,j}(l_j(t))\) for some continuous functions \(f_{i,j}\)
\end{enumerate}
Then \(X\) is a vertex reinforced jump process within time scale, i.e.\ there exists another time scale \(\tilde{D}\) such that \(X_{\tilde{D}^{-1}(t)}\) is a vertex reinforced jump process.
\end{thm}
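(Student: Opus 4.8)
The plan is to transport everything to the time-changed process $Y_t = X_{D^{-1}(t)}$, for which partial exchangeability holds outright, and to read off the consequences there. First I would carry out the change of variables for the time scale $D(s) = \sum_i h_i(l_i(s))$. Since $D'(s) = h_{X_s}'(l_{X_s}(s))$, a direct computation gives that the local time of $Y$ at a vertex $i$ is $L_i(t) = h_i\big(l_i(D^{-1}(t))\big)$, and that the jump rate of $Y$ from $i$ to a neighbour $j$ is
\begin{equation*}
 g_{i,j}(L_i,L_j) \;=\; \frac{f_{i,j}\big(h_j^{-1}(L_j)\big)}{h_i'\big(h_i^{-1}(L_i)\big)} .
\end{equation*}
The essential point, inherited from Assumption (3), is that $g_{i,j}$ is a function of the current local times $L_i,L_j$ \emph{alone}: it does not see the transition counts nor the occupation of the other vertices. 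Everything after this is an analysis of which local rate functions $g_{i,j}$ are compatible with partial exchangeability.

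Next I would invoke Freedman's theorem on $Y$. After checking the two regularity hypotheses, namely absence of fixed discontinuities (from right-continuity and the no-explosion assumption of Definition~\ref{defi1}) and recurrence (which I would extract from strong connectivity together with the reinforced nature of the rates), partial exchangeability upgrades to a representation of $Y$ as a mixture of Markov jump processes: there is a de Finetti measure $\mu$ on generators $Q=(q_{i,j})$ such that, conditionally on $Q$, the process $Y$ is Markov. The bridge to the previous step is that the predictable intensity of $Y$ must coincide with the Bayesian update, i.e. $g_{i,j}(L_i(t),L_j(t)) = \E\big[q_{i,j}\mid \mathcal F_t\big]$ under $\mu$. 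Writing out the likelihood of a Markov path, its sufficient statistics are the transition counts $N_{i,j}$ and the local times $L_i$; so the requirement becomes that this posterior mean collapse to a function of the two coordinates $L_i,L_j$ only.

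The heart of the argument, and the step I expect to be the main obstacle, is to show that this collapse forces the vertex reinforced structure. I would proceed in two stages. In the first stage I use strong connectivity: every edge lies on a cycle, and traversing such a cycle produces, for fixed $(L_i,L_j)$, a family of distinct admissible histories (distinct transition counts) that must all return the same posterior mean. Comparing equivalent strings of the form $i\to j\to i\to k\to i$ versus $i\to k\to i\to j\to i$, and more refined Eulerian rearrangements on a cycle, yields a multiplicative cocycle (path-independence) condition whose only solutions are of gradient type,
\begin{equation*}
 g_{i,j}(L_i,L_j)\;=\;W_{i,j}\,\frac{\phi_j(L_j)}{\phi_i(L_i)},\qquad W_{i,j}=W_{j,i},
\end{equation*}
the symmetry of $W$ being exactly Kolmogorov's reversibility criterion transported to $\mu$. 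In the second stage I must pin down the profile $\phi$. Translating the gradient form back through the display for $g_{i,j}$ and using once more that $f_{i,j}$ depends on $l_j$ only forces the separation $h_i'(l_i)=c_i\,\phi_i(h_i(l_i))$ and $f_{i,j}(l_j)=c_i W_{i,j}\,\phi_j(h_j(l_j))$; the remaining task is to prove that the function $\psi_j:=\phi_j\circ h_j$ must be \emph{affine}. This is where linear reinforcement becomes rigid: the posterior mean of $q_{i,j}$ can depend on the history through $L_i,L_j$ alone precisely for the conjugate (Gamma/Pólya) family, which on the generator side is the reversible field $q_{i,j}=W_{i,j}e^{u_j-u_i}$. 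I would realise this through a functional equation for $\psi_j$ obtained by comparing excursions that enter $j$ after accumulating different amounts of local time elsewhere; its continuous, positive solutions are exactly $\psi_j(x)=\alpha(1+\beta x)$, that is $f_{i,j}(x)=c_iW_{i,j}(1+\beta x)$.

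Finally I would close the loop. Given that $f_{i,j}$ is affine with symmetric weights, I exhibit the time scale $\tilde D(s)=\sum_k \tilde h_k(l_k(s))$ with each $\tilde h_k$ \emph{linear} and solve
\begin{equation*}
 \frac{f_{i,j}(l_j)}{\tilde h_i'(l_i)}\;=\;W_{i,j}\big(1+\tilde h_j(l_j)\big),
\end{equation*}
which is consistent exactly because the $l_i$-dependence of the left-hand side is the constant $c_i/\tilde h_i'$; this identifies $X_{\tilde D^{-1}}$ with a vertex reinforced jump process of weights $W$. I expect the proof of affineness in the second stage to be the genuine difficulty, since the first-order cocycle condition is automatically satisfied by any gradient rate; ruling out nonlinear reinforcements therefore requires the full force of the mixture representation, and it is strong connectivity that supplies enough independent histories to make the constraint rigid.
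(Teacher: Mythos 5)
Your outline diverges from the paper's proof in a way that leaves two genuine gaps. The first is structural: the middle of your argument runs through Freedman's theorem, producing a de Finetti measure $\mu$ on generators and then analysing when the posterior mean $\E[q_{i,j}\mid\mathcal F_t]$ collapses to a function of $(L_i,L_j)$. But Freedman's representation requires recurrence, which is \emph{not} among the hypotheses of Theorem \ref{thm1}; the graph may be infinite, and you cannot ``extract recurrence from strong connectivity together with the reinforced nature of the rates'' --- the reinforced (affine) nature of the rates is precisely the conclusion you are trying to prove, and even the genuine VRJP is transient on many infinite graphs, so no such extraction is available. The paper never invokes the mixture representation in the proof of Theorem \ref{thm1}: it works with partial exchangeability \emph{in density} (Definition \ref{defdensity}), i.e.\ equality of the explicit path densities of Proposition \ref{prop3} on equivalent trajectories, which requires neither recurrence nor a mixing measure. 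Your functional-equation analysis would have to be re-based on these densities for the argument even to begin under the stated hypotheses.

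The second gap is the step you yourself flag as the crux: that $\psi_j=\phi_j\circ h_j$ must be affine. In your proposal this is asserted by appeal to ``the conjugate (Gamma/P\'olya) family,'' which is essentially a rephrasing of the theorem rather than an argument, and the ``functional equation obtained by comparing excursions'' is never written down or solved. This is exactly where the paper does its real work. After establishing $f_{i,j}=\lambda_{i,j}h_j'$ (Lemma \ref{lem1}, via the two trajectories $i\to j\to i\to j\to i$ and $i\to j\to i\to j\to i$ with permuted holding times and an $\epsilon\to 0$ limit --- a two-vertex comparison, not a cycle) and separating exponential from product parts (Lemmas \ref{lem0} and \ref{lem2}), the paper uses strong connectivity once, in Lemma \ref{lem3}: comparing two equivalent traversals of a shortest cycle through $i\sim j$, together with two auxiliary short trajectories, and taking limits, it arrives at
\begin{equation*}
\lambda_{i_1,i_n}\,\bigl(H_{i_n}^2\bigr)'(0)\;=\;\lambda_{i_n,i_1}\,\bigl(H_{i_1}^2\bigr)'(r_1)\qquad\text{for all }r_1\geq 0,
\end{equation*}
where $H_i=h_i'\circ h_i^{-1}$ and $i_1=i$, $i_n=j$. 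This single identity simultaneously delivers the rigidity you are missing --- $(H_i^2)'\equiv A_i$ constant, whence $H_j^2(s)=A_js+B_j$ and, unwinding, $f_{i,j}(x)=W_{i,j}x+D_j$ --- and the reversibility relation $\lambda_{i,j}A_j=\lambda_{j,i}A_i$ that you obtain separately as a cocycle symmetry. So your stage-1 gradient claim roughly matches Lemma \ref{lem1}, but reversibility and affineness are not two mechanisms to be treated in sequence; in the paper they fall out of one and the same cycle comparison, and without an explicit derivation of that kind your stage 2 remains an unproven assertion.
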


\begin{rmk}
In fact, the hypothesis of Theorem~\ref{thm1} implies that the functions \(f_{i,j}(x)\) are necessarily of the form \(W_{i,j}x+\varphi_j\).
\end{rmk}
 \begin{rmk}
Note that we do not a priori require \(f_{i,j}=f_{j,i}\), i.e.\ there is no assumption of reversibility for \(X_t\); however the VRJP is a mixture of reversible Markov jump processes within time change.
\end{rmk}

\begin{rmk}
  Concerning the third assumption, we cannot prove the result with rate \(f_{i,j}(l_i,l_j)\), but the case where  \(f_{i,j}(l_i,l_j)=f_i(l_i)f_j(l_j)\) can be treated. In fact, by applying a time change, the process with rate function of the form \(f_i(l_i)f_j(l_j)\) can be reduced to our theorem.
\end{rmk}

In section 3, we introduce an equivalent notion of partial exchangeability and, as an example, we give a different proof of partial exchangeability of VRJP within a time scale. Section 4 contains the proof of Theorem~\ref{thm1}.

\section{The two notions of partial exchangeability}
\subsection{Partial exchangeability, infinitesimal point of view}

Consider a nearest neighbor jump process on \(G\) satisfying the third assumption of Theorem~\ref{thm1}. As we have assumed regularity on the trajectory of the process (c.f. Definition~\ref{defi1}), to describe the law of our process, it is enough to describe the probability of the  following events:
\[ \sigma = \{ X_{[0,t_1[}=i_0,X_{[t_1,t_2[}=i_1,X_{[t_2,t_3[}=i_2,\ldots,X_{[t_{n-1},t_n[}=i_{n-1},X_{[t_n,t]}=i_n\},\] which will be denoted
\[\sigma:\ i_0\xrightarrow[]{t_1}i_1\xrightarrow[]{t_2-t_1}i_2\ldots i_{n-1}\xrightarrow[]{t_n-t_{n-1}}i_n\xrightarrow[]{t-t_n}\] in the sequel and we call such an event {\it a trajectory}.

It turns out that when the jump rate is a continuous function of local times, the law of our process can be characterized by some function, which will be called {\it density} in the sequel. In fact, for the study of certain history depending random processes, we have the following lemma:

\begin{lem}
\label{lemm1}
  If \((X_t)\) is a jump process with jump rate depending only on local times and the current position of the random walker, i.e.\ there exists functions \(f_{i,j}(l)\) such that conditioned on the past, \(X_t\) jumps from \(i\) to \(j\) at rate \(f_{i,j}(l(t))\), and, moreover, \(f_{i,j}(l(t))\) does not depend on the variable \(l_i(t)\). Then there exists functions \(d_{\sigma}\), such that for all bounded measurable functions \(\Phi\) defined on the trajectories,
\[\E(\Phi(X_u,\ u\leq t))=\sum_{n\geq 1}\sum_{i_0,\ldots,i_n}\int d_{\sigma}\Phi(\sigma)dt + d_{i_0\xrightarrow[]{t}}\Phi(i_0\xrightarrow[]{t})\]
where
\(d_{\sigma}=\exp(-\int_0^t \sum_{j\sim X_s}f_{X_s,j}(l(s))ds)\prod_{k=1}^nf_{i_{k-1},i_k}(l(t_{k}))\) and
\(d_{i_0\xrightarrow[]{t}}=\P(X_s=i_0, 0\leq s\leq t)\).
\end{lem}
\begin{rmk}
  We believe that Lemma~\ref{lemm1} still hold when \(f_{i,j}(l)\) depends on \(l_i(t)\). In fact, if we can find a time changed process such that its jump rates do not depend on \(l_i(t)\), it is immediate by re-applying the inverse time change that Lemma~\ref{lemm1} holds in the general cases.
\end{rmk}
\begin{proof}
    As \(f_{i,j}(l(t))\) does not depend on \(l_i(t)\), the holding time of \(X_t\) at \(i\) is exponentially distributed with rate
\[\sum_{j\sim i}f_{i,j}(l(t))\]
and the probability of jumping from \(i\) to \(j\) is
\[p(i,j):=\frac{f_{i,j}(l(t))}{\sum_{k\sim i}f_{i,k}(l(t))}.\]
Moreover, the process up to time \(t\) is characterized by the events
\[i_0\xrightarrow[]{s_1}i_1\xrightarrow[]{s_2}\cdots\xrightarrow[]{s_n}i_n\xrightarrow[]{s_{n+1}},\ s_1,\ldots,s_{n+1}>0, \sum_{i=1}^{n+1} s_i\leq t.\]
For \(1\leq k\leq n+1\), denote \(t_k=s_1+\cdots+s_k\),
\begin{align*}
&\P(X_t\text{ follows the trajectory } i_0\xrightarrow[]{s_1}i_1\xrightarrow[]{s_2}\cdots\xrightarrow[]{s_n}i_n\xrightarrow[]{s_{n+1}},\ s_k>0,\ \sum s_k\leq t)\\
&=\int_{t_n\leq t}\prod_{k=1}^n \left(p(i_{k-1},i_k)\exp({-\sum_{j\sim i_{k-1}}f_{i_{k-1},j}(l(t_{k-1}))s_k})\cdot\sum_{j\sim i_{k-1}}f_{i_{k-1},j}(l(t_{k-1}))\right) \P(s_{n+1}>t-t_n) ds \\
&=\int_{t_1<t_2<\cdots<t_n<t}\exp(-\int_0^t\sum_{j\sim X_s}f_{X_s,j}(l(s))ds)\prod_{k=1}^{n}f_{i_{k-1},i_k}(l(t_{k-1}))dt,
\end{align*}with \(ds=ds_1\cdots ds_n,\ dt=dt_1\cdots dt_n\). Now the lemma follows by distinguishing different trajectories.
\end{proof}

\begin{defi}
We say that \(X_t\) admits a density if the assumptions in Lemma~\ref{lemm1} are satisfied, and we denote its density as \(d_{\sigma}\).
\end{defi}

Let us now give another definition of partial exchangeability for continuous time processes in terms of density. Define two trajectories \(\sigma\) and \(\tau\) to be equivalent and denoted \(\sigma \sim \tau\), if their discrete chain strings are equivalent and the local times are equal at each vertex.
Formally,
\begin{defi}
  Let \[\sigma =i_0\xrightarrow[]{t_1}i_1\xrightarrow[]{t_2-t_1}i_2\cdots i_{n-1}\xrightarrow[]{t_n-t_{n-1}}i_n\xrightarrow[]{t-t_n},\]
  \[\tau =j_0\xrightarrow[]{s_1}j_1\xrightarrow[]{s_2-s_1}j_2\cdots j_{n-1}\xrightarrow[]{s_n-s_{n-1}}j_n\xrightarrow[]{t-s_n}.\] Then \(\sigma\) and \(\tau\) are equivalent if and only if
  \[\begin{cases}
    \forall i\in V, \ l_i^{\sigma}(t)=l_i^{\tau}(t)\\
    \forall i,j\ N_{i,j}(\sigma)=N_{i,j}(\tau).
  \end{cases}\]
where \(N_{i,j}(\sigma)\) denotes the number of jumps from \(i\) to \(j\) in \(\sigma\), i.e. \(N_{i,j}(\sigma)=N_{i,j}((i_0,\ldots,i_n))\), and \(l_i^{\sigma}(t)=\int_0^t \mathds{1}_{\sigma_s=i}ds\) denotes the local time.
\end{defi}

\begin{defi}
\label{defdensity}
  A continuous time nearest neighbor jump process is said to be partially exchangeable in density if the densities are equal for any two equivalent trajectories. Or equivalently, the density depends only on final local times and the transition counts.
\end{defi}

\subsection{Equivalence of the two notions}
It turns out that in the case of nearest neighbor jump process with continuous jump rate functions, the notion of partial exchangeability in Definition~\ref{defFreedman} and in Definition~\ref{defdensity} are equivalent.

\begin{prop}
\label{prop1}
  If a continuous time nearest neighbor jump process is partially exchangeable in the sense of Definition~\ref{defdensity}, then it is partially exchangeable in the sense of Definition~\ref{defFreedman}.
\end{prop}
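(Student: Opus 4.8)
The plan is to compute the discrete-skeleton probabilities occurring in Freedman's definition by integrating the density over all continuous trajectories compatible with the grid observations, and then to transport this computation from a string $\xi$ to any equivalent string $\eta$ through an explicit bijection on trajectory space that preserves both the density and the jump-time measure.

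First I would fix $h>0$ and a string $\xi=(\xi_0,\dots,\xi_l)$ and express the skeleton probability, applying the density formula to the test function $\Psi=\mathds{1}[X_{kh}=\xi_k,\ 0\le k\le l]$, as
\[
\P(X_0=\xi_0,\dots,X_{lh}=\xi_l)=\sum_{n\ge 0}\ \sum_{(i_0,\dots,i_n)}\ \int_{\Delta_n(\xi)} d_{\sigma}\,dt_1\cdots dt_n,
\]
where the inner sum runs over nearest-neighbor state sequences (the term $n=0$ being the no-jump trajectory) and $\Delta_n(\xi)$ is the set of ordered jump times $0<t_1<\cdots<t_n<lh$ for which the resulting trajectory $\sigma$ occupies $\xi_k$ at every grid time $kh$. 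This step relies only on the existence of a density and the absence of explosion, so that the series converges and exhausts the event.

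Second, I would reduce to an elementary combinatorial move. Since $\xi\sim\eta$ means they share the same starting state and the same transition counts $N_{i,j}$, the two strings are two orderings of the same transition multigraph and are connected by a finite sequence of transpositions, each swapping two consecutive loops of the skeleton based at a common vertex $w$ (consecutive segments beginning and ending at $w$ at grid times); this is the same reduction underlying the discrete theory. It therefore suffices to treat a single such move, say swapping the loop occupying grid intervals $[ah,bh]$ with the adjacent loop occupying $[bh,ch]$.

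Third — the core of the argument — I would define $\Phi$ to swap the two corresponding pieces of the continuous trajectory, translating each block in time by the appropriate multiple of $h$ and fixing the complement. Because both loops begin and end at $w$ at grid times, $\Phi(\sigma)$ is again a valid nearest-neighbor trajectory and its skeleton is exactly $\eta$. Crucially, $\Phi$ merely rearranges the same excursions, hence it preserves the total local time at every vertex and the transition counts $N_{i,j}(\sigma)$; thus $\sigma\sim\Phi(\sigma)$ as trajectories, and partial exchangeability in density (Definition~\ref{defdensity}) gives $d_{\sigma}=d_{\Phi(\sigma)}$. Since $\Phi$ acts on jump times by block translations, it is a measure-preserving bijection of $\Delta_n(\xi)$ onto $\Delta_n(\eta)$. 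Substituting $\Phi$ into the integral above yields equality of the two skeleton probabilities for the elementary move, and composing moves gives the general case, which is Freedman's definition.

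The step I expect to be most delicate is the interface between the second and third parts: verifying that the loop-swaps exhaust string equivalence and that they align with whole grid intervals, and then checking carefully that $\Phi$ carries the constraint set $\Delta_n(\xi)$ bijectively onto $\Delta_n(\eta)$ while preserving Lebesgue measure on the simplex of jump times. Establishing the integral formula of the first step is comparatively routine given the standing regularity assumptions.
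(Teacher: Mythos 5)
Your overall strategy --- realizing the grid-skeleton probabilities as integrals of the density and transporting them through a density- and measure-preserving bijection on path space --- is exactly the right one, and your steps 1 and 3 are sound. The genuine gap is the combinatorial reduction in step 2: it is \emph{false} that two equivalent strings are always connected by a sequence of transpositions of consecutive loops based at a common vertex. Concretely, take the graph with edges $\{a,b\},\{a,c\},\{b,c\},\{b,d\}$ and the strings $\xi=(a,b,a,c,b,d)$, $\eta=(a,c,b,a,b,d)$. Both start at $a$ and both have transition counts $N_{ab}=N_{ba}=N_{ac}=N_{cb}=N_{bd}=1$, so $\xi\sim\eta$ and $\xi\neq\eta$; but no vertex occurs three times in either string, and two consecutive loops at a vertex $w$ require three occurrences of $w$, so neither string admits any nontrivial loop swap. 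Each of $\xi,\eta$ is therefore alone in its orbit under the moves you allow, and no composition of such moves can carry one to the other. (This is precisely the classical reason why invariance under loop interchanges is strictly weaker than invariance under transition-count equivalence; the discrete theory handles it through recurrence and excursion decompositions, not through the finite combinatorial claim you invoke.) So your proof, as written, does not cover pairs such as $(\xi,\eta)$, and the delicate point you flagged is indeed where it breaks.

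The repair is to abandon the factorization into elementary moves and construct the bijection globally, which is what the paper does: for each ordered pair $(i,j)$ and each $k$, the map $T$ sends the grid block $[lh,(l+1)h)$ carrying the $k$-th transition $i\to j$ of $\xi$ to the position of the block carrying the $k$-th transition $i\to j$ of $\eta$, and fixes the final segment $[lh_{\mathrm{last}},\infty)$. Equality of the transition counts makes this a well-defined permutation of blocks (no generation by transpositions of loops is needed); almost-sure continuity of the path at grid times ensures each block is a nearest-neighbor path from $\xi_l$ to $\xi_{l+1}$, so the rearranged path is a legitimate trajectory whose skeleton is $\eta$; and, exactly as in your step 3, the permutation preserves local times and transition counts of the continuous trajectory (hence the density, by Definition~\ref{defdensity}) and acts on the jump-time coordinates by piecewise translations, hence preserves Lebesgue measure. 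Integrating then gives $\P(X_0=\xi_0,\dots,X_{lh}=\xi_l)=\P(X_0=\eta_0,\dots,X_{lh}=\eta_l)$. With that single replacement your argument becomes the paper's proof.
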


\begin{proof}
  Suppose that the process \(X_t\) is partially exchangeable in density, let \(h>0\), consider the event \(I= \{X_0=i_0,X_h=i_1,\ldots,X_{nh}=i_n\}\), let \((j_0=i_0,j_1,\ldots,j_n)\) be an equivalent string of \((i_0,\ldots,i_n)\), and \(J=\{X_0=j_0,X_h=j_1,\ldots,X_{nh}=j_n\}\).

We construct a bijection \(T\) which maps trajectories of \(I\) to those of \(J\). As \((i_0,\ldots,i_n)\), \((j_0,\ldots,j_n)\) are equivalent, for any pair of neighbors \((i,j)\), there are exactly a same number of transition counts from \(i\) to \(j\). Let us define \(T\) to be the transformation which is a permutation of the time segmentations \([lh,(l+1)h)\) of size \(h\); which, for any \(k\), moves the \(k\)th transition \(i\xrightarrow[]{{k\text{th}}} j\) of \(I\) to the \(k\)th transition \(i\xrightarrow[]{k\text{th}} j\) of \(J\), and leaving the last time segmentation \([nh,\infty)\) invariant. Figure~\ref{fig:1} illustrates an example of such application.

\begin{figure}[!h]
  \centering
  \includegraphics[width=.7\textwidth]{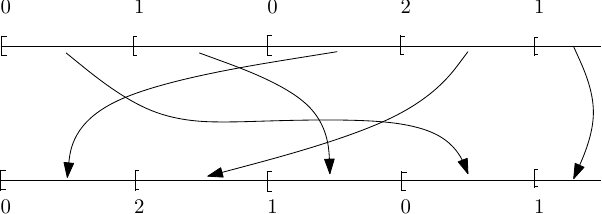}
  \caption{The transformation \(T\) for \(I=\{ X_0=0,X_h=1,X_{2h}=0,X_{3h}=2,X_{4h}=1\} \) and \(J=\{ X_0=0,X_h=2,X_{2h}=1,X_{3h}=0,X_{4h}=1\} \).}
\label{fig:1}
\end{figure}
\noindent Let \[\sigma=k_0\xrightarrow[]{s_1}k_1\xrightarrow[]{s_2}k_2\cdots k_{N-1}\xrightarrow[]{s_N}k_N\xrightarrow[]{s_{N+1}}\] be one trajectory of the event \(I\), we check that \[T(\sigma)=k'_0\xrightarrow[]{s'_1}k'_1\xrightarrow[]{s'_2}k'_2\cdots k'_{N-1}\xrightarrow[]{s'_N}k'_N\xrightarrow[]{s'_{N+1}}\] is a trajectory of the event \(J\), and that \(T\) is one-one and on-to (c.f. Figure~\ref{fig:1.5}).
If we fix the total number of jumps \(N\) and the discrete trajectory \((k_0,k_1,\ldots,k_N)\), then \(T\) can be though as a substitution of integration. Thus

\begin{figure}[!h]
  \centering
  \includegraphics[width=.7\textwidth]{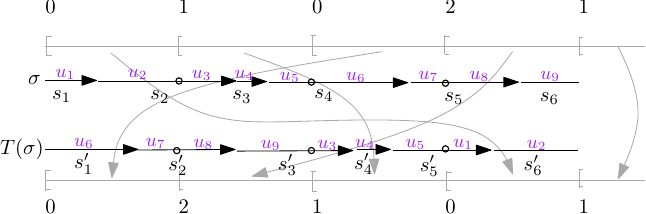}
  \caption{An example of \(\sigma\) and \(T(\sigma)\).}
\label{fig:1.5}
\end{figure}
\begin{align*}
 \P(I)&=\sum_N\sum_{k_0,k_1,\ldots,k_N}\int \mathds{1}_{s_1,\ldots,s_{N+1}\in I(N,k_0,\ldots,k_N)}d_{\sigma}ds_1\cdots ds_{N+1} \\
&=\sum_N\sum_{k'_0,k'_1,\ldots,k'_N}\int \mathds{1}_{s'_1,\ldots,s'_{N+1}\in I'(N,k'_0,\ldots,k'_N)}d_{T(\sigma)} ds'_1\cdots ds'_{N+1} =\P(J),
\end{align*}
where \(I(N,k_0,\ldots,k_N)\) is the subset of \(\mathbb{R}^{N+1}\) defined as the set of \((s_1,\ldots,s_{N+1})\) such that the event \(k_0\xrightarrow[]{s_1}k_1\xrightarrow[]{s_2}\cdots k_{N}\xrightarrow[]{s_{N+1}}\) is in \( I\); and \(I'(N,k'_0,\ldots,k'_N)\) is its image by applying \(T\); see Figure 2 for a concrete example. As \(T\) preserves local times and the numbers of transition counts, these two integrals are whence equal.
\end{proof}

\begin{prop}
  If a jump process is partially exchangeable in the sense of Definition~\ref{defFreedman}, and its jump rate is a continuous function of local times, then it is also partially exchangeable in the sense of Definition~\ref{defdensity}.
\end{prop}

\begin{proof}
  Let \(X_t\) denote such a process, for \(h>0\), consider the \(\sigma\)-algebra \(\mathcal{F}_h=\sigma(X_{nh}, n\geq 0)\), let \[\mathcal{F}_0=\sigma(\cup_{h>0}\mathcal{F}_h)\] and \[\mathcal{F}=\sigma(X_t,t\geq 0).\]
As in~\cite{freedman1996finetti}, we only consider \(h\) running through the binary rationals. Note that \(\mathcal{F}_0=\mathcal{F}\) thanks to the right continuity of the trajectories.

Let \(\sigma=i_0\xrightarrow[]{t_1}i_1\xrightarrow[]{t_2-t_1}i_2\cdots i_n\xrightarrow[]{t-t_n}\) be a trajectory with \(n\) jumps (say \(n\geq 1\) to avoid triviality). Let \(\{X^{(h)}\sim \sigma/h\}\) denotes the event
\[\{ X_0=\sigma_0, \ X_h=\sigma_h,\ldots,X_{Nh}=\sigma_{Nh},\ \text{ with }N=\lfloor t/h\rfloor\}.\]
It turns out that
\[d_{\sigma}=\lim_{h\rightarrow 0}\P(X^{(h)}\sim \sigma/h)h^{-n}.\]
In fact, let \(\Psi=\mathds{1}_{X^{(h)}\sim \sigma/h}\), by definition of \(d_{\sigma}\),
\begin{equation}\label{den}
\E(\Psi( X_u,\ u\leq t))=\P(X^{(h)}\sim \sigma/h)=\sum_{k\geq 1}\sum_{i_1,\ldots,i_k}\int d_{\tau} \Psi(\tau)dt_1\cdots dt_k\end{equation} where  \[\tau =i_0\xrightarrow[]{t_1}i_1\xrightarrow[]{t_2-t_1}i_2\cdots i_{k-1}\xrightarrow[]{t_k-t_{k-1}}i_k\xrightarrow[]{t-t_k}.\]
When \(h\) is small enough, the sum in~\eqref{den} must be over \(k\geq n\), and we have
\begin{align*}
  \P(X^{(h)}\sim \sigma/h)=\P_1+\P_2.
\end{align*} where for some \(p_k,k=1,\ldots, n\) depending on \(h\)
\begin{align*}
\P_1=\P(&(X_u)_{0\leq u\leq t} \text{ makes }n \text{ jumps at times }s_1,\ldots,s_n \\
 &\text{ with }s_k\in (p_kh,(p_k+1)h] \text{ and the trajectory is }i_0,\ldots,i_n)\\
\P_2=\P(&(X_u)_{0\leq u\leq t} \text{ makes more than }n+1 \text{ jumps and } X^{(h)}\sim \sigma/h)
\end{align*}
Note that the jump rates are bounded from both below and above, and any holding time in the event of \(\P_2\) must be in an interval of length lesser than \(2h\), whence the probability of making \(n+l\) (\(l\geq 1\)) jumps following the trajectory \(\sigma/h\) is smaller than the probability of \(n+l\) independent exponential variables (of parameter \(C\)) each smaller than \(2h\), where \(C\) is an upper bound of the jump rates. Whence
\begin{align*}
  \P_2 \leq \sum_{l\geq 1} (\P(\text{cst}\leq \mathcal{E}xp(C)<\text{cst}+2h))^{n+l}\leq  \sum_{l\geq 1} (\P(\mathcal{E}xp(C)<2h))^{n+l} =O(h^{n+1}) .
\end{align*}
Thus \(\P_2\) can be dropped when taking the limit. In addition,
\[\P_1=\int_{p_nh}^{(p_n+1)h}\cdots \int_{p_1h}^{(p_1+1)h} d_{\sigma}\ dt_1\cdots dt_n,\] note that here \(d_{\sigma}\) depends only on \(t_1,\ldots,t_n\) and it is an absolutely integrable function, by Lebesgue differentiation theorem (Theorem 1.6.19~\cite{tao2011introduction}) \(\lim_{h\rightarrow 0}\P_1/h^{n}=d_{\sigma}\).
\begin{figure}[!h]
  \centering
  \includegraphics[width=.5\textwidth]{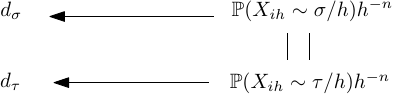}
\end{figure}
Now let \(\sigma\sim \tau\), when \(h\) is sufficiently small, proceeding as in the diagram shows that \(d_{\sigma}=d_{\tau}\).
\end{proof}

\subsection{Example: VRJP is partially exchangeable within a time change}

Recall that \(Y_s=X_{D^{-1}(s)}\) with \(D(s)=\sum_{i\in V}(l_i(s)^2+2l_i(s))\), It turns out that we can write down the density of the trajectory \(\sigma\) of the (time changed) VRJP process \(Y\) (For convenience, write \(s_{n+1}\) for \(s\) in the sequel). The density of
\[\sigma:= \ i_0\xrightarrow[]{s_1}i_1\xrightarrow[]{s_2-s_1}i_2\cdots i_{n-1}\xrightarrow[]{s_n-s_{n-1}}i_n\xrightarrow[]{s-s_n}\]
is (c.f.~\cite{sabot2013ray}), denoting \(S_i(t)=\int_0^t\mathds{1}_{Y_u=i}du\) the local time of \(Y\),
\begin{equation}
\label{densityST}
\begin{aligned}
d_{\sigma}=&(\frac{1}{2})^n\prod_{k=1}^nW_{i_{k-1},i_k}\prod_{i\in V, i\neq i_n}\frac{1}{\sqrt{1+S_i(s)}}\\
&\cdot\exp{(- \sum_{i \sim j} W_{i,j}(\sqrt{(S_i(s)+1)(S_j(s)+1)}-1))},
\end{aligned}
\end{equation}
 which clearly depends only on final local times and transition counts, thus by Proposition~\ref{prop1}, \(Y\) is partially exchangeable. On finite graph it is rather easy to prove that the VRJP is recurrent (for example, using a representation of VRJP by time changed Poisson point process as in~\cite{sabot2011edge}, and then use an argument as in~\cite{davis1990reinforced} or~\cite{sellke1994reinforced}). Therefore, \(Y\) is a mixture of Markov jump processes.

For convenient, we include a proof of this in the sequel (after the proof of Proposition~\ref{prop3}), since the mechanisms of the proof enlightens the proof of the main theorem.

\section{Proof of Theorem~\ref{thm1}}
\subsection{Computation of densities}
Let \(X\) be a nearest neighbor jump process on \(G\) satisfying the assumptions of Theorem~\ref{thm1}, in particular, recall the time scale
\begin{equation} \label{timescale} D(s)=\sum_{i\in V}h_i(l_i(s)).\end{equation}
Let \(l_i(t)\) be the local time of the process \(X\) at vertex \(i\) at time \(t\). Let us denote the process after time change to be \begin{equation}\label{processY} Y_t=X_{D^{-1}(t)},\end{equation} let \begin{equation} \label{localtimeY}S_i(s)=\int_0^s \mathds{1}_{Y_u=i}du\end{equation} denote the local time of \(Y\). Consider the trajectory
\begin{equation}\label{trajY}\sigma: \ i_0\xrightarrow[]{t_1}i_1\xrightarrow[]{t_2-t_1}i_2\cdots i_{n-1}\xrightarrow[]{t_n-t_{n-1}}i_n\xrightarrow[]{t-t_n}\end{equation} where \(0<t_1<\cdots <t_n<t\), after applying the time change, the corresponding trajectory for \(Y\) is
 \[\sigma_Y: \ i_0\xrightarrow[]{s_1}i_1\xrightarrow[]{s_2-s_1}i_2\cdots i_{n-1}\xrightarrow[]{s_n-s_{n-1}}i_n\xrightarrow[]{s-s_n}\] where \(s_k=D(t_k)\).

\begin{prop}
\label{prop3}
  With the same settings as in equations~\eqref{timescale}~\eqref{processY}~\eqref{localtimeY}~\eqref{trajY}, the density of the trajectory \(\sigma_Y\) for \(Y\) is
\[d_{\sigma}^Y=\exp{\left(-\int_0^{s}\sum_{j\sim Y_v}\frac{f_{Y_v,j}(h_j^{-1}(S_j(v)))}{h'_{Y_v}(h_{Y_v}^{-1}(S_{Y_v}(v)))}dv\right)}\prod_{k=1}^n \frac{f_{i_{k-1},i_k}(h_{i_k}^{-1}(S_{i_k}(s_{k-1})))}{h'_{i_{k-1}}(h^{-1}_{i_{k-1}}(S_{i_{k-1}}(s_k)))}.\]
\end{prop}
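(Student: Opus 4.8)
The plan is to compute the density of the original process $X_t$ first, since $X$ has explicit jump rates $f_{i,j}$ as functions of local time, and then transport this density through the time change $D$ to obtain the density of $Y_t = X_{D^{-1}(t)}$. For the process $X$, the density of a trajectory that sits at $i_{k-1}$ on the interval $[t_{k-1}, t_k)$ and then jumps to $i_k$ is a product of two contributions along standard lines: each jump at time $t_k$ from $i_{k-1}$ to $i_k$ contributes the instantaneous rate $f_{i_{k-1},i_k}(l_{i_k}(t_k))$, and along each sojourn the process must avoid jumping to \emph{any} neighbor, which contributes the survival factor $\exp(-\int \sum_{j\sim X_u} f_{X_u,j}(l_j(u))\,du)$. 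Concretely I would write
\begin{equation*}
d_{\sigma}^{X} = \exp\!\left(-\int_0^{t}\sum_{j\sim X_u} f_{X_u,j}(l_j(u))\,du\right)\prod_{k=1}^{n} f_{i_{k-1},i_k}\bigl(l_{i_k}(t_k)\bigr),
\end{equation*}
where the exponent collects the total escape rate over the whole trajectory and the product records the jump rates at the $n$ jump times.

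The second and main step is the change of variables $s = D(t)$. Since $D(s) = \sum_{i} h_i(l_i(s))$ and the $h_i$ are $\mathbb{R}^+$ homeomorphisms, $D$ is a strictly increasing time scale, and I would relate the local times of $X$ and $Y$ by $S_i(D(t)) = h_i(l_i(t))$, equivalently $l_i(t) = h_i^{-1}(S_i(D(t)))$. This is the key dictionary: it lets me rewrite every occurrence of $l_i$ in $d_\sigma^X$ in terms of $S_i$. The jump-rate factors transform directly: at the $k$th jump the argument $l_{i_k}(t_k)$ becomes $h_{i_k}^{-1}(S_{i_k}(s_k))$, matching the numerators in the claimed formula. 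The subtle point is that the density is a density with respect to the jump \emph{times}, so when I push the measure $dt_1\cdots dt_n$ forward to $ds_1\cdots ds_n$ I pick up Jacobian factors $dt_k/ds_k = 1/D'(t_k)$ at each jump. Here I must be careful: on the sojourn at $i_{k-1}$ only the local time $l_{i_k}$ is growing while all other local times are frozen, so the effective derivative of $D$ at the jump time $t_k$ reduces to $h'_{i_{k-1}}(l_{i_{k-1}}(t_k))$, which after substitution gives the denominator $h'_{i_{k-1}}(h_{i_{k-1}}^{-1}(S_{i_{k-1}}(s_k)))$. Tracking exactly which vertex's local time is active on each segment — and hence which $h_i'$ appears — is the most error-prone bookkeeping in the argument.

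The final step handles the survival exponent. Applying the same substitution $v = D(u)$, $dv = D'(u)\,du$ inside the integral $\int_0^t \sum_{j\sim X_u} f_{X_u,j}(l_j(u))\,du$, and again using that along the trajectory at time $u$ with $X_u = Y_v$ only the local time of the current vertex $Y_v$ is increasing (so $D'(u) = h'_{Y_v}(l_{Y_v}(u))$), I convert the integrand into
\begin{equation*}
\sum_{j\sim Y_v}\frac{f_{Y_v,j}\bigl(h_j^{-1}(S_j(v))\bigr)}{h'_{Y_v}\bigl(h_{Y_v}^{-1}(S_{Y_v}(v))\bigr)},
\end{equation*}
which is exactly the exponent in Proposition~\ref{prop3}. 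Assembling the transformed survival factor with the transformed jump product yields the stated $d_\sigma^Y$. I expect the genuine obstacle to be \emph{not} a hard estimate but the careful justification that the time change acts segment-by-segment so that the single-vertex derivative $h'_{Y_v}$ (rather than the full sum $\sum_i h_i' l_i'$) governs the Jacobian on each sojourn; once that is nailed down, the remaining manipulations are a clean change of variables. A minor technical check is that $D$ is a genuine increasing bijection of $[0,\infty)$ so that $D^{-1}$ and the substitutions are well defined, which follows from the $h_i$ being homeomorphisms together with the no-explosion assumption on $X$.
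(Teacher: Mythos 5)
Your proposal is correct and follows essentially the same route as the paper: first derive $d_\sigma^X = \exp\left(-\int_0^t \sum_{j\sim X_u} f_{X_u,j}(l_j(u))\,du\right)\prod_k f_{i_{k-1},i_k}(l_{i_k}(t_k))$ from the exponential holding times and jump probabilities, then push it through the time change via the dictionary $S_i(D(t))=h_i(l_i(t))$ and the observation that $D'$ on each sojourn equals $h'$ of the currently occupied vertex, which produces exactly the Jacobian denominators $h'_{i_{k-1}}\circ h_{i_{k-1}}^{-1}(S_{i_{k-1}}(s_k))$ and the transformed exponent. (One slip of the pen: on the sojourn at $i_{k-1}$ it is $l_{i_{k-1}}$, not $l_{i_k}$, that grows; your ensuing conclusion $D'(t_k)=h'_{i_{k-1}}(l_{i_{k-1}}(t_k))$ is nonetheless exactly right, so this is only a typo.)
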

\begin{proof}
Applying Lemma~\ref{lemm1} to the process \(X\),
\[d_{\sigma}=\exp{\left(-\int_0^{t}\sum_{j\sim X_u} f_{X_u,j}(l_j(u))du\right)}\prod_{k=1}^n f_{i_{k-1},i_k}(l_{i_k}(t_{k-1})).\]
Recall that in~\eqref{timescale} we assumed that \(h_i: \mathbb{R^+}\rightarrow \mathbb{R^+}\) are diffeomorphisms satisfying \(h_i(0)=0\).

Next we compute the density for the same trajectory \(\sigma\) but for the process \(Y_s=X_{D^{-1}(s)}\), as we have \(S_i(D(s))=h_i(l_i(s))\), derivation leads to
\[S_i(D(s))'=D'(s)\mathds{1}_{Y_{D(s)}=i}=h_i'(l_i(s))\mathds{1}_{X_s=i}.\]
Hence \[(D^{-1}(t))'=\frac{1}{D'(D^{-1}(t))}=\frac{1}{h'_{Y_{t^-}}\circ  h_{Y_{t^-}}^{-1}(S_{Y_{t^-}}(t))},\]
\[l_{i_k}(t_{k-1})=h_{i_k}^{-1}(S_{i_k}(D(t_{k-1})))=h^{-1}_{Y_{s_k}}(S_{Y_{s_k}}(s_{k-1})).\]
Substituting \(s=D(t)\), we have
\begin{align*}d^Y_{\sigma}=
\exp{(-\int_0^{s}\sum_{j\sim Y_v}\frac{f_{Y_v,j}(h_j^{-1}(S_j(v)))}{h'_{Y_v}(h_{Y_v}^{-1}(S_{Y_v}(v)))}dv)}\prod_{k=1}^n \frac{f_{i_{k-1},i_k}(h_{i_k}^{-1}(S_{i_k}(s_{k-1})))}{h'_{i_{k-1}}(h^{-1}_{i_{k-1}}(S_{i_{k-1}}(s_k)))}.
\end{align*}
\end{proof}

\noindent{\bf Back to the partial exchangeability of VRJP}

\begin{proof}
Apply the previous proposition to VRJP, where \(f_{i,j}(l_j)=W_{i,j}(1+l_j)\) and \(h_i(l_i)=l_i^2+2l_i\).
The density \(d_{\sigma}^Y\) is
\[\frac{1}{2^n}\exp\left ( -\int_0^{s}\sum_{j\sim Y_u} \frac{W_{Y_u,j}\sqrt{S_j(u)+1}}{2\sqrt{S_{Y_u}(u)+1}}du\right) \prod_{k=1}^n\left(W_{i_{k-1},i_k} \frac{\sqrt{S_{i_k}(s_{k-1})+1}}{\sqrt{S_{i_{k-1}}(s_k)+1}}\right).\]
As our trajectory is left continuous without explosion, starting at \(i_0\), if we calculate the product through the trajectory, by telescopic simplification, it results that the product reduces to
\[\prod_{i\in V\ i\neq i_n}\frac{1}{\sqrt{S_i(s)+1}} \prod_{k=1}^n W_{i_{k-1},i_k}.\]
To compute the integral inside the exponential, it is enough to note that, in the expression:
\[\sum_{i\sim j}W_{i,j}(\sqrt{(S_i(s)+1)(S_j(s)+1)}-1),\]
the local times \(S_i(s),i\in V\) of the process \(Y\) only vary (linearly) with \(s\) when the process is at \(i\), i.e., when \(Y_t=i\). Therefore, the derivative of the above expression with respect to \(s\) equals to
\[\sum_{j\sim Y_s}\frac{W_{Y_s,j}\sqrt{S_j(s)+1}}{2\sqrt{S_{Y_s}(s)+1}}\]
which is what we integrate inside the exponential.

Whence~\eqref{densityST} is proved, and expression~\eqref{densityST} depends only on final local times and transition counts, the result hence follows.
\end{proof}

\subsection{Determination of time change \(h\)}
In the sequel we work with the time changed process \(Y\), to simplify notations, we will write \(d_{\sigma}\) for \(d^Y_{\sigma}\) when it does not lead to any confusion. By Proposition~\ref{prop3}, the density of certain trajectory contains an exponential term and a product term, let us denote
\[d_{\sigma}=\exp(-\int\sigma) \cdot \prod \sigma,\]
with \[\begin{cases}
           \int \sigma=\int_0^{s}\sum_{j\sim Y_v}\frac{f_{Y_v,j}(h_j^{-1}(S_j(v)))}{h'_{Y_v}(h_{Y_v}^{-1}(S_{Y_v}(v)))}dv\\
           \prod \sigma =\prod_{k=1}^n \frac{f_{i_{k-1},i_k}(h_{i_k}^{-1}(S_{Y_{s_k}}(s_{k-1})))}{h'_{i_{k-1}}(h^{-1}_{i_{k-1}}(S_{Y_{s_{k-1}}}(s_k)))}
        \end{cases}\]
where the exponential term stems from those exponential waiting times, and the product term corresponds to  the probability of the discrete chain.

The heuristics of the proof in this subsection is the following: as we assumed partial exchangeability, if we consider two equivalent trajectories, then their densities  share the same expression, by comparing them we can hence deduce certain equalities involving \(f_{i,j}\) and \(h_i\) etc. It turns out that these equalities determine \(h_i\)s then \(f_{i,j}\)s.

The following fact is simple but important, suppose that at time \(s\), the random walker arrives at \(i_0\), each vertex \(i\) has accumulated local time \(l_i:=S_i(s)\); then it jumps to \(i_1\) after an amount of time \(t\), by Proposition~\ref{prop3}, the density has acquired a multiplicative factor
\begin{equation}
\label{factor}
\exp\left(-\int_s^{s+t}\sum_{j\sim i_0}\frac{f_{i_0,j}\circ h_j^{-1}(l_j)}{h'_{i_0}\circ h_{i_0}^{-1}(l_{i_0}+v)}dv\right)\cdot\frac{f_{i_0,i_1}\circ h_{i_1}^{-1}(l_{i_1})}{h'_{i_0}\circ h_{i_0}^{-1}(l_{i_0}+t)}.
\end{equation}
This fact is in constant use in the sequel, when we explicit the density of certain trajectory.
\begin{lem}
\label{lem0}
  Let  \(\sigma=i_0\xrightarrow[]{s_1}i_1\xrightarrow[]{s_2-s_1}i_2\cdots i_{n-1}\xrightarrow[]{s_n-s_{n-1}}i_n\xrightarrow[]{s-s_n} \) be a trajectory, then \(\displaystyle \int \sigma=\int \tilde{\sigma} + \int \hat{\sigma}\) where
\[\int \tilde{\sigma}= \int_0^s \sum_{j\in \sigma, j\sim Y_v}\frac{f_{Y_v,j}(h_j^{-1}(S_j(v)))}{h'_{Y_v}(h_{Y_v}^{-1}(S_{Y_v}(v)))}dv,\ \int\hat{\sigma}=\int_0^s \sum_{j\notin \sigma, j\sim Y_v}\frac{f_{Y_v,j}(h_j^{-1}(S_j(v)))}{h'_{Y_v}(h_{Y_v}^{-1}(S_{Y_v}(v)))}dv \] and if \(\tau\) is such that \(\tau\sim \sigma\), then
\(\displaystyle \int \hat{\sigma}=\int \hat{\tau}.\)
\end{lem}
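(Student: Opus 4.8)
The plan is to treat the two assertions separately. The additive decomposition $\int\sigma=\int\tilde\sigma+\int\hat\sigma$ is immediate: for each fixed $v$ the finite sum over neighbors $j\sim Y_v$ splits into those $j$ that appear in the trajectory $\sigma$ and those that do not, so by linearity of the integral the two contributions add up to $\int\sigma$. No genuine work is needed here, and I would dispatch it in a single sentence.

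The substance is the equality $\int\hat\sigma=\int\hat\tau$ for $\tau\sim\sigma$. The key observation is that if $j\notin\sigma$, then $Y$ never visits $j$ on $[0,s]$, whence $S_j(v)=0$ for every $v\in[0,s]$. Since $h_j(0)=0$ forces $h_j^{-1}(0)=0$, the numerator $f_{Y_v,j}(h_j^{-1}(S_j(v)))$ collapses to the constant $f_{Y_v,j}(0)$, depending on $v$ only through the currently occupied vertex $Y_v$. Thus $\int\hat\sigma$ retains no information about the order of visits, only about which vertex is occupied and how much local time has accumulated there.

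To make this precise I would group the time integral according to the occupied vertex. Writing $T_i=\{v\in[0,s]:Y_v=i\}$ for each $i\in\sigma$, and using $Y_v=i\Rightarrow h'_{Y_v}(h_{Y_v}^{-1}(S_{Y_v}(v)))=h'_i(h_i^{-1}(S_i(v)))$, I obtain
\[
\int\hat\sigma=\sum_{i\in\sigma}\Big(\sum_{j\notin\sigma,\,j\sim i}f_{i,j}(0)\Big)\int_{T_i}\frac{dv}{h'_i(h_i^{-1}(S_i(v)))}.
\]
On $T_i$ one has $S_i'(v)=1$, and $S_i$ pushes Lebesgue measure restricted to $T_i$ forward to Lebesgue measure on $[0,l_i]$, where $l_i:=S_i(s)$; the substitution $u=S_i(v)$ therefore turns each inner integral into $\int_0^{l_i}du/h'_i(h_i^{-1}(u))$, a quantity that depends only on the final local time $l_i$ and on the fixed diffeomorphism $h_i$. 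This gives the closed form
\[
\int\hat\sigma=\sum_{i\in\sigma}\Big(\sum_{j\notin\sigma,\,j\sim i}f_{i,j}(0)\Big)\int_0^{l_i}\frac{du}{h'_i(h_i^{-1}(u))}.
\]

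Finally I would read off the invariance under $\sim$. The common starting vertex together with the transition counts $N_{i,j}$ determine exactly which vertices are visited, so the occupied set $\sigma$ — and hence both the inner sums $\sum_{j\notin\sigma,\,j\sim i}f_{i,j}(0)$ and the range of the outer sum — agrees for $\tau$; and the final local times $l_i$ coincide by the very definition of equivalence. Consequently the displayed closed form is identical for $\sigma$ and $\tau$, yielding $\int\hat\sigma=\int\hat\tau$. The one step that demands care, and which I expect to be the main technical obstacle, is the change of variables: because $T_i$ is in general a union of several disjoint intervals, I would justify it through the occupation-time (pushforward-of-measure) description of $S_i$ rather than a naive interval-by-interval substitution.
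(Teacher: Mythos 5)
Your proof is correct and follows essentially the same route as the paper: both reduce $\int\hat\sigma$ to $\sum_{j\notin\sigma,\,i\in\sigma,\,j\sim i}f_{i,j}(0)\bigl(\hat H_i(S_i(s))-\hat H_i(0)\bigr)$ by noting $S_j\equiv 0$ off the visited set and then integrating out the occupation times, your pushforward-of-Lebesgue-measure justification being exactly the paper's use of the primitive $\hat H_i$ of $1/(h_i'\circ h_i^{-1})$ in disguise. Your explicit remark that the visited set (and hence the inner sums) is itself an invariant of the equivalence class is a small but genuine tightening of the paper's ``depends only on final local times.''
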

\begin{proof}
  Note that for \(j\notin \sigma\), \(S_j(u)=0\) for all \(u\leq s\). Let \(\hat{H}_i\) be the primitive of \(\displaystyle \frac{1}{h_i'\circ h_i^{-1}}\) such that \(\hat{H}_i(0)=0\),
  \begin{align*}
    \int \hat{\sigma}&=\sum_{j\notin \sigma}\int_0^s \mathds{1}_{Y_v\sim j} \frac{f_{Y_v,j}(0)}{h'_{Y_v}(h_{Y_v}^{-1}(S_{Y_v}(v)))}dv \\
&=\sum_{j\notin \sigma,i\in \sigma,j\sim i}f_{i,j}(0) \int_0^s \frac{\mathds{1}_{Y_v=i}}{h'_i(h_i^{-1}(S_i(v)))}dv \\
&=\sum_{j\notin \sigma,i\in \sigma,j\sim i}f_{i,j}(0) \hat{H}_i(S_i(s))
  \end{align*} which depends only on final local times, thus if \(\tau\sim \sigma\), then \(\displaystyle \int\hat{\tau}=\int\hat{\sigma}\).
\end{proof}
In the sequel \(cst\) denotes some constant, which can vary from line to line.
\begin{lem}
\label{lem1}
  If the process \(X\) admits such a time change \(D\) which makes it partially exchangeable in density, then for any \(i\sim j\), there exists some constants \(\lambda_{i,j}\) such that
  \begin{equation}
    \label{eq:1}
    f_{i,j}(x)=\lambda_{i,j}h_j'(x),\ \forall x\geq 0.
  \end{equation}

\end{lem}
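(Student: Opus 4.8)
The plan is to extract \eqref{eq:1} directly from Definition \ref{defdensity} by comparing the densities of two carefully chosen equivalent trajectories that differ only in how the local time accumulated at \(j\) is split between two consecutive back-and-forth excursions across the edge \(\{i,j\}\). Concretely, fix \(i\sim j\); after routing the process from its start \(i_0\) to \(i\) along a common prefix (which contributes identical factors to every density below and hence cancels in all the comparisons), I would consider
\[\sigma:\ i\xrightarrow{c_0} j \xrightarrow{y_1} i \xrightarrow{\varepsilon} j \xrightarrow{Y-y_1} i \xrightarrow{c_2}\]
together with the trajectory \(\tau\) obtained from \(\sigma\) by replacing the split \((y_1,Y-y_1)\) of the total sojourn \(Y\) at \(j\) by \((y_1',Y-y_1')\), keeping the three sojourns \(c_0,\varepsilon,c_2\) at \(i\) unchanged. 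Since \(\sigma\) and \(\tau\) start at the same vertex, realise the same directed transition counts (two crossings \(i\to j\) and two crossings \(j\to i\)), and have identical final local times (\(Y\) at \(j\) and \(c_0+\varepsilon+c_2\) at \(i\)), they are equivalent, so partial exchangeability forces \(d_\sigma=d_\tau\).

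Next I would expand both densities with Proposition \ref{prop3} as \(d=\exp(-\int\sigma)\cdot\prod\sigma\). The crucial point is that the local time of \(j\) is frozen between the exit from \(j\) at the end of the first excursion and the re-entry at the start of the second, both occurring at the same value \(y_1\). Hence in \(\prod\sigma\) the denominator \(h_j'(y_1)\) produced by the jump out of \(j\) pairs with the numerator \(f_{i,j}(y_1)\) produced by the jump back into \(j\), yielding exactly the ratio \(f_{i,j}(y_1)/h_j'(y_1)\); every other factor of \(\prod\sigma\) depends only on \(c_0,\varepsilon,Y\) and is therefore common to \(\sigma\) and \(\tau\). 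Consequently the shared factors cancel in the quotient, giving \(\prod\sigma/\prod\tau=\big(f_{i,j}(y_1)/h_j'(y_1)\big)\big/\big(f_{i,j}(y_1')/h_j'(y_1')\big)\), which is precisely the quantity to be shown constant, and this quotient is manifestly independent of \(\varepsilon\).

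The main obstacle is the exponential term, because \(\int\sigma\) depends on the full time-ordering and not merely on the final local times. By Lemma \ref{lem0} the contribution of the neighbours not visited is already equal for \(\sigma\) and \(\tau\), so only the mutual \(i\)-\(j\) part survives, and a direct computation gives \(\int\sigma-\int\tau=\varepsilon\,(f_{i,j}(y_1)-f_{i,j}(y_1'))+(y_1-y_1')\,(f_{j,i}(c_0)-f_{j,i}(c_0+\varepsilon))\). This does not vanish for fixed \(\varepsilon\), which is exactly why I would send the intermediate sojourn \(\varepsilon\to 0\): by continuity of \(f_{j,i}\) the right-hand side tends to \(0\), while \(\prod\sigma/\prod\tau\) stays fixed. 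Passing to the limit in the identity \(\prod\sigma/\prod\tau=\exp(\int\sigma-\int\tau)\) then forces \(f_{i,j}(y_1)/h_j'(y_1)=f_{i,j}(y_1')/h_j'(y_1')\). As \(y_1,y_1'\) range freely over \((0,Y)\) and \(Y\) is arbitrary, the ratio \(f_{i,j}/h_j'\) is constant on \((0,\infty)\); denoting this constant \(\lambda_{i,j}\) and extending to \(x=0\) by continuity yields \eqref{eq:1}. I expect the only delicate points to be the justification of the \(\varepsilon\to 0\) limit (uniform control of the densities, which follows from continuity and local boundedness of the rates) and the routine check that the common prefix can be arranged so that \(x\) sweeps all of \((0,\infty)\).
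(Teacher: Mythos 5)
Your argument is correct and runs on the same machinery as the paper's proof: expand the density via Proposition \ref{prop3} as \(\exp(-\int\sigma)\cdot\prod\sigma\), discard the unvisited neighbours with Lemma \ref{lem0}, compare two equivalent four-jump back-and-forth trajectories over the edge \(\{i,j\}\), and send the auxiliary sojourn \(\varepsilon\to 0\) so that the exponential parts equalize while the product parts retain the information. The one substantive difference is the choice of the equivalent pair. The paper permutes the two excursions in time, taking \(\sigma=i\xrightarrow{\varepsilon}j\xrightarrow{\varepsilon}i\xrightarrow{t}j\xrightarrow{s}i\) and \(\tau=i\xrightarrow{t}j\xrightarrow{s}i\xrightarrow{\varepsilon}j\xrightarrow{\varepsilon}i\); its limit produces the two-variable identity \(\bigl(f_{i,j}\circ h_j^{-1}(s)/h_j'\circ h_j^{-1}(s)\bigr)\cdot\bigl(f_{j,i}\circ h_i^{-1}(t)/h_i'\circ h_i^{-1}(t)\bigr)=\mathrm{cst}\), and the lemma then follows by fixing \(t\) and varying \(s\). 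You keep the time order and instead redistribute the total sojourn at \(j\) between its two visits, so the quotient \(\prod\sigma/\prod\tau\) collapses at once to the one-variable ratio and no separation-of-variables step is needed; that is a genuine, if small, streamlining, and your observation that this quotient is independent of \(\varepsilon\) while \(\exp(\int\sigma-\int\tau)\to 1\) is exactly the right way to close the argument.

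One imprecision to repair: your displayed formula for \(\int\sigma-\int\tau\) is the one valid for the process \emph{before} the time change. For the time-changed process \(Y\), whose density Proposition \ref{prop3} provides and in whose time scale your sojourns \(c_0,\varepsilon,y_1,Y\) are measured, the rates must be composed with \(h^{-1}\) and the lengths \(\varepsilon\), \(y_1-y_1'\) replaced by increments of the primitives \(\hat{H}_i\), \(\hat{H}_j\) of \(1/(h_i'\circ h_i^{-1})\), \(1/(h_j'\circ h_j^{-1})\) introduced in Lemma \ref{lem0}:
\[
\int\sigma-\int\tau=\bigl(\hat{H}_i(c_0+\varepsilon)-\hat{H}_i(c_0)\bigr)\bigl(F_{i,j}(y_1)-F_{i,j}(y_1')\bigr)+\bigl(\hat{H}_j(y_1)-\hat{H}_j(y_1')\bigr)\bigl(F_{j,i}(c_0)-F_{j,i}(c_0+\varepsilon)\bigr),
\]
where \(F_{i,j}=f_{i,j}\circ h_j^{-1}\) and \(F_{j,i}=f_{j,i}\circ h_i^{-1}\). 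This costs nothing: the first increment is still \(o(1)\) as \(\varepsilon\to0\) and continuity of \(F_{j,i}\) still kills the second bracket, so \(\int\sigma-\int\tau\to 0\) and your limit step survives. For the same reason, the constant ratio you actually obtain is \(F_{i,j}/\bigl(h_j'\circ h_j^{-1}\bigr)\) evaluated at \(Y\)-local times; since \(h_j^{-1}\) is a homeomorphism of \(\mathbb{R}^+\) fixing \(0\), its constancy on \((0,\infty)\) is equivalent, after the substitution \(x=h_j^{-1}(y_1)\) and continuity at \(0\), to \eqref{eq:1}.
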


\begin{proof}
  Let \(\epsilon>0\), consider the following two trajectories for the process \(Y\):
\[ \sigma = i\xrightarrow[]{\epsilon}j\xrightarrow[]{\epsilon}i\xrightarrow[]{t}j\xrightarrow[]{s}i \xrightarrow[]{\cdot}\] 
\[ \tau = i\xrightarrow[]{t}j\xrightarrow[]{s}i\xrightarrow[]{\epsilon}j\xrightarrow[]{\epsilon}i\xrightarrow[]{\cdot}\] 
Note that \(\sigma\) and \(\tau\) have the same transition counts and the final local times on vertex \(i,j\) are respectively equal. Thus the densities of these trajectories are a.s.\ equal by partial exchangeability. By Lemma~\ref{lem0},

\begin{align*}
d_{\sigma}=  \prod \sigma \cdot \exp(\int \tilde{\sigma}+\int\hat{\sigma}),
\end{align*}
where \[ \begin{cases} \prod \sigma =\frac{f_{i,j}\circ h_j^{-1}(0)}{h_i'\circ h_i^{-1}(\epsilon)}\cdot\frac{f_{j,i}\circ h_i^{-1}(\epsilon)}{h_j'\circ h_j^{-1}(\epsilon)}\cdot
\frac{f_{i,j}\circ h_j^{-1}(\epsilon)}{h_i'\circ h_i^{-1}(\epsilon+t)}\cdot\frac{f_{j,i}\circ h_i^{-1}(\epsilon+t)}{h_j'\circ h_j^{-1}(\epsilon+s)}\\
\int \tilde{\sigma}=
  \int_0^{\epsilon}\frac{f_{i,j}\circ h_j^{-1}(0)}{h_i'\circ h_i^{-1}(v)}dv+
 \int_0^{\epsilon}\frac{f_{j,i}\circ h_i^{-1}(\epsilon)}{h_j'\circ h_j^{-1}(v)}dv+
 \int_0^{t}\frac{f_{i,j}\circ h_j^{-1}(\epsilon)}{h_i'\circ h_i^{-1}(\epsilon+v)}dv+
  \int_0^{s}\frac{f_{j,i}\circ h_i^{-1}(\epsilon+t)}{h_j'\circ h_j^{-1}(\epsilon+v)}dv. \end{cases}\]

\begin{align*}
d_{\tau}=\prod \tau \cdot \exp(\int \tilde{\tau}+\int\hat{\tau}),
\end{align*}
where \[\begin{cases} \prod \tau=
\frac{f_{i,j}\circ h_j^{-1}(0)}{h_i'\circ h_i^{-1}(t)}\cdot\frac{f_{j,i}\circ h_i^{-1}(t)}{h_j'\circ h_j^{-1}(s)}\cdot
\frac{f_{i,j}\circ h_j^{-1}(s)}{h_i'\circ h_i^{-1}(t+\epsilon)}\cdot\frac{f_{j,i}\circ h_i^{-1}(\epsilon+t)}{h_j'\circ h_j^{-1}(\epsilon+s)} \\
\int \tilde{\tau}=
 \int_0^{t}\frac{f_{i,j}\circ h_j^{-1}(0)}{h_i'\circ h_i^{-1}(v)}dv+
 \int_0^{s}\frac{f_{j,i}\circ h_i^{-1}(t)}{h_j'\circ h_j^{-1}(v)}dv+
 \int_0^{\epsilon}\frac{f_{i,j}\circ h_j^{-1}(s)}{h_i'\circ h_i^{-1}(t+v)}dv+
  \int_0^{\epsilon}\frac{f_{j,i}\circ h_i^{-1}(\epsilon+t)}{h_j'\circ h_j^{-1}(s+v)}dv;\end{cases}\]
We do not explicit \(\int \hat{\sigma}\) and \(\int\hat{\tau}\) as they cancel when we compare these expressions (c.f. Lemma~\ref{lem0}).

Letting \(\epsilon\rightarrow 0\) yields that \(\exp(\int \tilde{\sigma})=\exp(\int \tilde{\tau})\); therefore \(\prod \sigma=\prod \tau\), i.e.
\[\forall s,t,\ \frac{f_{i,j}\circ h_j^{-1}(s)}{h_j'\circ h_j^{-1}(s)} \cdot \frac{f_{j,i} \circ h_i^{-1}(t)}{h_i'\circ h_i^{-1}(t)}=cst.\]
Now fix \(t\), let \(s\) vary, whence \[\forall s,\ f_{i,j}\circ h_j^{-1}(s)=cst\cdot h_j'\circ h_j^{-1}(s),\] and let \(\lambda_{i,j}\) denotes this constant, as \(h_j^{-1}\) is a diffeomorphism, its range is \(\mathbb{R}^+\), which allows us to conclude.
\end{proof}

 The next lemma states in some sense that the exponential part and the product part appearing in the density of a trajectory can be treated separately.
\begin{lem}
\label{lem2}
  Let \(\sigma,\tau\) be two trajectories,  and denote \[d_{\sigma}=\exp(\int \sigma) \cdot \prod \sigma,\hspace{.5cm} d_{\tau}=\exp(\int \tau)\cdot \prod \tau,\] if \(\sigma \sim \tau\), then \(\prod \sigma =\prod \tau\).
\end{lem}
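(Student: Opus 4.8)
The plan is to reduce $\prod\sigma$ to an expression that manifestly depends only on the starting vertex, the ending vertex, the transition counts, and the final local times, and then to read off equality for equivalent trajectories. First I would use Lemma~\ref{lem1} to rewrite each factor of $\prod\sigma$: since $f_{i_{k-1},i_k}=\lambda_{i_{k-1},i_k}\,h'_{i_k}$, setting $g_i:=h'_i\circ h_i^{-1}$ gives
\[\prod\sigma=\prod_{k=1}^n \lambda_{i_{k-1},i_k}\cdot\prod_{k=1}^n\frac{g_{i_k}(S_{i_k}(s_{k-1}))}{g_{i_{k-1}}(S_{i_{k-1}}(s_k))}.\]
The first factor equals $\prod_{(i,j)}\lambda_{i,j}^{N_{i,j}(\sigma)}$, which depends only on the transition counts and is therefore unchanged under $\sigma\sim\tau$. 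So the whole matter reduces to the second, purely geometric product.

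Next I would telescope that product vertex by vertex. Fix a vertex $i$ and list its sojourn intervals in chronological order; because $G$ has no loops, $S_i$ stays constant while the walk sits at any neighbour, so the value of $S_i$ read off at the instant the walk jumps into $i$ is exactly the accumulated local time at $i$ \emph{before} that sojourn, while the value read at the instant it jumps out is the accumulated local time \emph{after} that sojourn. Denoting the successive accumulated local times at $i$ by $0=c_0<c_1<\cdots<c_{m_i}=S_i(s)$, each sojourn contributes a factor $g_i(c_{r-1})$ to the numerator (when it is entered by a jump) and $g_i(c_r)$ to the denominator (when it is left by a jump). These cancel in cascade, and the only boundary terms that survive are $g_i(c_0)=g_i(0)$, present precisely when the first sojourn is entered by a jump, i.e. when $i\neq i_0$, and $g_i(c_{m_i})=g_i(S_i(s))$ in the denominator, present precisely when the last sojourn is left by a jump, i.e. when $i\neq i_n$. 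Collecting over all vertices yields
\[\prod\sigma=\prod_{(i,j)}\lambda_{i,j}^{N_{i,j}(\sigma)}\cdot\prod_{i\neq i_0}g_i(0)\cdot\prod_{i\neq i_n}\frac{1}{g_i(S_i(s))}.\]

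Finally I would observe that this expression depends on $\sigma$ only through the transition counts, the starting vertex $i_0$, the ending vertex $i_n$, and the final local times $(S_i(s))_{i\in V}$. For $\sigma\sim\tau$ the transition counts and the final local times coincide by definition of trajectory equivalence, and $i_0$ coincides because equivalent strings start at the same state; the endpoint $i_n$ is then forced as well, since for any walk the flow balance $\sum_j N_{j,i}-\sum_j N_{i,j}=\mathds{1}_{i=i_n}-\mathds{1}_{i=i_0}$ determines $i_n$ from $i_0$ and the transition counts. Hence $\prod\sigma=\prod\tau$. The one delicate point is the boundary bookkeeping in the telescoping: one must argue carefully that the missing ``entering'' jump at $i_0$ and the missing ``leaving'' jump at $i_n$ are exactly what break the otherwise complete cancellation, and that the absence of self-loops is what guarantees the identification of $S_i$ at the jump instants with the pre- and post-sojourn local times.
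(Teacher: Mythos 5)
Your proof is correct and follows essentially the same route as the paper's: substitute \(f_{i_{k-1},i_k}=\lambda_{i_{k-1},i_k}h'_{i_k}\) via Lemma~\ref{lem1}, use the no-loop identity \(S_{i_k}(s_{k-1})=S_{i_k}(s_k)\) to read each factor at the entry/exit local times, and telescope vertex by vertex so that only transition counts and final local times survive. Your boundary bookkeeping (the surviving factors \(g_i(0)\) for \(i\neq i_0\) and \(1/g_i(S_i(s))\) for \(i\neq i_n\), plus the flow-balance argument pinning down \(i_n\)) is in fact more careful than the paper's one-line telescoping, which states the final product somewhat loosely.
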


\begin{proof}
  We have \(S_{Y_{s_k}}(s_k)=S_{Y_{s_k}}(s_{k-1})\), thus Lemma~\ref{lem1} yields that \(f_{i_{k-1},i_k}\circ h_{i_k}^{-1}(S_{Y_{s_k}}(s_{k-1}))=\lambda_{i_{k-1},i_k}h_{i_k}'\circ h_{i_k}^{-1}(S_{Y_{s_k}}(s_{k}))\).  Whence the product part is \[\prod \sigma = \prod_{k=1}^n \frac{f_{i_{k-1},i_k}(h_{i_k}^{-1}(S_{Y_{s_k}}(s_{k-1})))}{h'_{i_{k-1}}(h^{-1}_{i_{k-1}}(S_{Y_{s_{k-1}}}(s_k)))}=\prod_{k=1}^n \lambda_{i_{k-1},i_k}\frac{\prod_{i\neq i_0}h_i'\circ h_i^{-1}(0)}{\prod_{i\neq i_n}h_i'\circ h_i^{-1}(S_i(s))},\]  and the last term depends only on the transition counts and final local times.
\end{proof}

\begin{lem}
\label{lem3}
 Let \(H_i=h_i'\circ h_i^{-1}\), then for some constant \(A_i\) (recall that \(h_i\) is assumed \(\mathcal{C}^2\) diffeomorphism), \[ (H_i^2)'=A_i \text{ and if  }i\sim j, \text{ then } \lambda_{i,j}A_j=\lambda_{j,i}A_i.\]
\end{lem}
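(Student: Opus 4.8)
The plan is to squeeze out what remains of partial exchangeability once Lemmas \ref{lem1} and \ref{lem2} are available. Since $\sigma\sim\tau$ forces $d_\sigma=d_\tau$ and Lemma \ref{lem2} already gives $\prod\sigma=\prod\tau$, the only surviving constraint is the equality of the exponential parts, $\int\sigma=\int\tau$; and by Lemma \ref{lem0} the pieces $\int\hat\sigma,\int\hat\tau$ coincide automatically, so in fact $\int\tilde\sigma=\int\tilde\tau$ for every pair of equivalent trajectories. Using Lemma \ref{lem1} to replace $f_{Y_v,j}\circ h_j^{-1}(S_j(v))=\lambda_{Y_v,j}H_j(S_j(v))$ with $H_i=h_i'\circ h_i^{-1}$, I would rewrite
\[\int\tilde\sigma=\int_0^s\sum_{j\in\sigma,\,j\sim Y_v}\frac{\lambda_{Y_v,j}\,H_j(S_j(v))}{H_{Y_v}(S_{Y_v}(v))}\,dv,\]
and observe that, recalling the primitive $\hat H_i$ of $1/H_i$ from Lemma \ref{lem0}, each sojourn of $Y$ at a vertex $i$ contributes a factor $\hat H_i(\mathrm{end})-\hat H_i(\mathrm{start})$ measured in the $i$-local time, while every neighbouring local time stays frozen.

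Next I would test this identity on a one-edge family of trajectories. Fix an edge $\{i,j\}$ and let the walker shuttle back and forth, e.g. $i\xrightarrow{a_1}j\xrightarrow{b_1}i\xrightarrow{a_2}j\xrightarrow{b_2}i\xrightarrow{a_3}$ with the holding times as free parameters. Two such trajectories are equivalent precisely when the totals $a_1+a_2+a_3$ and $b_1+b_2$ agree, so $\int\tilde\sigma$, viewed as a function of $(a_1,a_2,a_3,b_1,b_2)$, must depend only on those two sums. Writing it explicitly as a sum of $i$-sojourn terms $\lambda_{i,j}H_j(\beta)\,[\hat H_i(\cdot)-\hat H_i(\cdot)]$ and $j$-sojourn terms $\lambda_{j,i}H_i(c)\,[\hat H_j(\cdot)-\hat H_j(\cdot)]$, I would impose the invariance by equating the partial derivatives $\partial_{a_2}$ and $\partial_{a_3}$ of $\int\tilde\sigma$ (equal because it depends on the $a$'s only through their sum). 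After cancellation this collapses, with $c=a_1+a_2$, $\beta=b_1$, $B=b_1+b_2$, to the single relation
\[\frac{\lambda_{i,j}}{\lambda_{j,i}}\cdot\frac{H_j(B)-H_j(\beta)}{\hat H_j(B)-\hat H_j(\beta)}=H_i(c)\,H_i'(c),\]
whose left side involves only the $j$-local times $\beta\le B$ and whose right side involves only the $i$-local time $c$.

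Since $c$ and $(\beta,B)$ range over independent domains, separation of variables forces both sides equal to one constant $\kappa$. The right-hand equation $H_iH_i'=\kappa$ is exactly $\tfrac12(H_i^2)'=\kappa$, giving $(H_i^2)'=A_i$ with $A_i:=2\kappa$; and differentiating the constancy of the left-hand ratio in $B$ shows $H_jH_j'$ is constant as well, so $(H_j^2)'=A_j$ is constant and the ratio equals $A_j/2$. Matching the two expressions for $\kappa$ then yields $\tfrac{\lambda_{i,j}}{\lambda_{j,i}}\cdot\tfrac{A_j}{2}=\tfrac{A_i}{2}$, i.e. $\lambda_{i,j}A_j=\lambda_{j,i}A_i$. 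Running this over every edge (each vertex has a neighbour) proves both assertions at once; note the argument is purely local to a single edge and does not invoke strong connectivity.

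The step I expect to be the main obstacle is the bookkeeping that converts the invariance of $\int\tilde\sigma$ into the clean separated equation above: one must track which local times are frozen during each sojourn, choose enough segments so that the accumulated $i$-time $c$ at a transition and the partial and total $j$-times $(\beta,B)$ can be varied independently inside the equivalence class, and verify that the many $\hat H$-increments cancel so that only the displayed terms survive. Everything else — the reduction via Lemmas \ref{lem0}--\ref{lem2}, the substitution $u=S_i(v)$ in each sojourn integral, and the final separation-of-variables computation — is routine.
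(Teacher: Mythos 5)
Your proof is correct, and it takes a genuinely different route from the paper's. Both arguments begin with the same reduction: partial exchangeability plus Lemmas \ref{lem0} and \ref{lem2} imply $\int\tilde{\sigma}=\int\tilde{\tau}$ for equivalent trajectories. But from there the paper invokes assumption 2 of Theorem \ref{thm1}: it takes a shortest cycle $i_1\sim i_2\sim\cdots\sim i_n\sim i_1$ through the edge, compares an equivalent pair of cycle trajectories $\sigma\sim\tau$ against an equivalent pair of short trajectories $\sigma'\sim\tau'$, subtracts to cancel the bulk term $\Delta$, sends $s_{n-1}\to 0$, then divides by $r_2s_1$ and takes limits to reach $\lambda_{i_1,i_n}(H_{i_n}^2)'(0)=\lambda_{i_n,i_1}(H_{i_1}^2)'(r_1)$, whose constancy in $r_1$ gives the lemma. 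You instead stay on a single edge with the five-parameter shuttle family and convert the invariance into $\partial_{a_2}\int\tilde{\sigma}=\partial_{a_3}\int\tilde{\sigma}$; I checked this computation term by term and it does collapse to your separated identity
\[
\frac{\lambda_{i,j}}{\lambda_{j,i}}\cdot\frac{H_j(B)-H_j(\beta)}{\hat{H}_j(B)-\hat{H}_j(\beta)}=H_i(c)\,H_i'(c),
\]
which is consistent with VRJP (there $H(s)=2\sqrt{s+1}$ and both sides equal $2$), and your separation-of-variables endgame is sound. Your route buys something real: it never uses strong connectivity, and since the paper uses that hypothesis only in this lemma, your argument would let one weaken assumption 2 of Theorem \ref{thm1} to mere connectivity; it also exhibits the ``reversibility'' relation $\lambda_{i,j}A_j=\lambda_{j,i}A_i$ as a single-edge phenomenon rather than a cycle phenomenon. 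One point you should make explicit (the paper is equally informal here): equating $\partial_{a_2}$ and $\partial_{a_3}$ presupposes that $H_i$ is differentiable, which is not among the standing assumptions since the $h_i$ are only diffeomorphisms. In your scheme this is self-repairing: $\partial_{a_3}\int\tilde{\sigma}$ exists explicitly because $\hat{H}_i$ is a $C^1$ primitive, and the dependence of $\int\tilde{\sigma}$ on the sums alone then forces $\partial_{a_2}\int\tilde{\sigma}$ to exist and agree with it, so the unique term containing $H_i(c)$, namely $\lambda_{j,i}H_i(c)\bigl[\hat{H}_j(B)-\hat{H}_j(\beta)\bigr]$, must be differentiable in $c$, i.e.\ $H_i'$ exists; likewise the constancy of your left-hand ratio makes $H_j$ an affine function of the $C^1$ function $\hat{H}_j$, so $H_j'$ exists. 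With those two sentences added, your proof is complete and strictly more economical in hypotheses than the paper's.
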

\begin{rmk}
The latest equality tells that the process is reversible. However, we did not assume the reversibility of the process, but vertex reinforced jump processes are reversible (as a mixture of reversible Markov jump process), so are the edge reinforced random walks. In contrast, directed edge reinforced random walks are mixtures of non reversible Markov chains, with independent Dirichlet environments. We can hence expect that the reversibility is a consequence of a non oriented linear reinforcement (where linearity leads to partial exchangeability).
\end{rmk}
\begin{proof}

Recall that we have assumed that the graph is strongly connected, i.e.\ if \(i,j\) are two adjacent vertices, there exists a shortest cycle \(i_1\sim i_2\sim i_3\cdots \sim i_n\sim i_1\) with \(i_1=i,i_n=j\) and the \(i_k\)s are distinct and \(n\geq 2\).

\begin{figure}[!h]
  \centering
  \includegraphics[width=.7\textwidth]{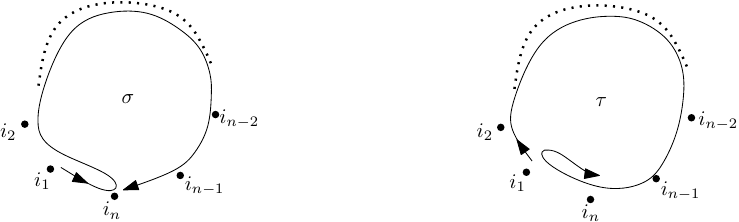}
  \caption{the trajectories \(\sigma\) and \(\tau\) in Lemma~\ref{lem3}.}
\label{fig:st}
\end{figure}

Let \((i_1=i,i_2,i_3,\ldots,i_n=j)\) be a cycle as described, consider the trajectories (c.f. Figure~\ref{fig:st})
 \[\sigma=i_1\xrightarrow[]{r_1}i_n\xrightarrow[]{r_2}i_1\xrightarrow[]{s_1}i_2\xrightarrow[]{s_2}i_{3}\cdots i_{n-2}\xrightarrow[]{s_{n-2}}i_{n-1}\xrightarrow[]{s_{n-1}}i_n \]
\[\tau=i_1\xrightarrow[]{r_1}i_2\xrightarrow[]{s_2}i_{3} \cdots i_{n-2}\xrightarrow[]{s_{n-2}}i_{n-1}\xrightarrow[]{s_{n-1}}i_n\xrightarrow[]{r_2}i_1\xrightarrow[]{s_1}i_n.\]
As \(\sigma\sim \tau\), by Lemma~\ref{lem2} and Lemma~\ref{lem0}, \(\int \tilde{\sigma}=\int \tilde{\tau}\). Also let
\[\sigma'=i_1\xrightarrow[]{r_1}i_n\xrightarrow[]{r_2}i_1\xrightarrow[]{s_1}i_2\xrightarrow[]{s_2}i_1\]
\[\tau'=i_1\xrightarrow[]{r_1}i_2\xrightarrow[]{s_2}i_1\xrightarrow[]{s_1}i_n\xrightarrow[]{r_2}i_1,\] thus \(\int \tilde{\sigma'}=\int \tilde{\tau'}\).
We are going to compute explicitly \(\int \tilde{\sigma},\ \int\tilde{\tau}\) etc, using~\eqref{factor}, let \(s=r_1+r_2+s_1+\cdots+s_{n-1}\) and recall that \(\hat{H}_i\) is the primitive of \(\displaystyle \frac{1}{h_i'\circ h_i^{-1}}\) such that \(\hat{H}_i(0)=0\).
\begin{align*}
\int\tilde{\sigma}=&\sum_{(i,j)\in\sigma^2,i\sim j} \lambda_{i,j}\int_0^{s} \mathds{1}_{Y_v=i}\frac{h_j'\circ h_j^{-1}(S_j(v))}{h_i'\circ h_i^{-1}(S_i(v))}dv \\
&=\lambda_{i_1,i_2}H_{i_2}(0)\hat{H}_{i_1}(r_1+s_1)  + \lambda_{i_2,i_1}H_{i_1}(r_1+s_1)\hat{H}_{i_2}(s_2)\\
&+\lambda_{i_1,i_n}\left(H_{i_n}(0)\hat{H}_{i_1}(r_1)+H_{i_n}(r_2)(\hat{H}_{i_1}(r_1+s_1)-\hat{H}_{i_1}(r_1))\right)\\
&+\lambda_{i_n,i_1}H_{i_1}(r_1)\hat{H}_{i_n}(r_2)+\lambda_{i_n,i_{n-1}}H_{i_{n-1}}(0)\hat{H}_{i_n}(r_2)\\
&+\lambda_{i_{n-1},i_n}H_{i_n}(r_2)\hat{H}_{i_{n-1}}(s_{n-1}) + \Delta
\end{align*}
where \(\Delta\) is defined as follows: let \(Q_k:=H_{i_{k}}(0)\hat{H}_{i_{k-1}}(s_{i_{k-1}})\) and \(Q_k':=H_{i_k}(s_k)\hat{H}_{i_{k+1}}(s_{i_{k+1}})\),
\begin{align*}
  \Delta=\sum_{k=3}^{n-1}\lambda_{i_{k-1},i_k}Q_k+\lambda_{i_k,i_{k-1}}Q'_{k-1}.
\end{align*}
For \(\tilde{\tau}\) we have:
\begin{align*}
\int\tilde{\tau}=&\sum_{(i,j)\in\tau^2,i\sim j} \lambda_{i,j}\int_0^{s} \mathds{1}_{Y_v=i}\frac{h_j'\circ h_j^{-1}(S_j(v))}{h_i'\circ h_i^{-1}(S_i(v))}dv \\
&=\lambda_{i_1,i_2}H_{i_2}(0)\hat{H}_{i_1}(r_1)+H_{i_2}(s_2)(\hat{H}_{i_1}(r_1+s_1)-\hat{H}_{i_1}(r_1)) \\
&+\lambda_{i_2,i_1}H_{i_1}(r_1)\hat{H}_{i_2}(s_2)\\
&+\lambda_{i_1,i_n}\left(H_{i_n}(0)\hat{H}_{i_1}(r_1)+H_{i_n}(r_2)(\hat{H}_{i_1}(r_1+s_1)-\hat{H}_{i_1}(r_1))\right)\\
&+\lambda_{i_n,i_1}H_{i_1}(r_1)\hat{H}_{i_n}(r_2)+\lambda_{i_n,i_{n-1}}H_{i_{n-1}}(s_{n-1})\hat{H}_{i_n}(r_2)\\
&+\lambda_{i_{n-1},i_n}H_{i_n}(0)\hat{H}_{i_{n-1}}(s_{n-1}) + \Delta
\end{align*} with the same \(\Delta\). Also
\begin{align*}
\int\tilde{\sigma}'&=\lambda_{i_1,i_2}\left(H_{i_2}(0)\hat{H}_{i_1}(r_1)+H_{i_2}(0)(\hat{H}_{i_1}(r_1+s_1)-\hat{H}_{i_1}(r_1))\right) \\
&+\lambda_{i_2,i_1}H_{i_1}(r_1+s_1)\hat{H}_{i_2}(s_2)\\
&+\lambda_{i_1,i_n}\left(H_{i_n}(0)\hat{H}_{i_1}(r_1)+H_{i_n}(r_2)(\hat{H}_{i_1}(r_1+s_1)-\hat{H}_{i_1}(r_1))\right)\\
&+\lambda_{i_n,i_1}H_{i_1}(r_1)\hat{H}_{i_n}(r_2)
\end{align*}
\begin{align*}
\int\tilde{\tau}'&=\lambda_{i_1,i_2}\left(H_{i_2}(0)\hat{H}_{i_1}(r_1)+H_{i_2}(s_2)(\hat{H}_{i_1}(r_1+s_1)-\hat{H}_{i_1}(r_1))\right) \\
&+\lambda_{i_2,i_1}H_{i_1}(r_1)\hat{H}_{i_2}(s_2)\\
&+\lambda_{i_1,i_n}\left(H_{i_n}(0)\hat{H}_{i_1}(r_1)+H_{i_n}(0)(\hat{H}_{i_1}(r_1+s_1)-\hat{H}_{i_1}(r_1))\right)\\
&+\lambda_{i_n,i_1}H_{i_1}(r_1+s_1)\hat{H}_{i_n}(r_2).
\end{align*}
Recall that \(\int\tilde{\sigma}-\int\tilde{\sigma}'=\int\tilde{\tau}-\int\tilde{\tau}'\), which leads to
\begin{align*}
  &\lambda_{i_n,i_{n-1}}H_{i_{n-1}}(0)\hat{H}_{i_n}(r_2)+\lambda_{i_{n-1},i_n}H_{i_n}(r_2)\hat{H}_{i_{n-1}}(s_{n-1})\\
&=\lambda_{i_1,i_n}(H_{i_n}(r_2)-H_{i_n}(0))(\hat{H}_{i_1}(r_1+s_1)-\hat{H}_{i_1}(r_1))\\
&+\lambda_{i_n,i_1}(H_{i_1}(r_1)-H_{i_1}(r_1+s_1))\hat{H}_{i_n}(r_2)\\
 &+\lambda_{i_n,i_{n-1}}H_{i_{n-1}}(s_{n-1})\hat{H}_{i_n}(r_2)+\lambda_{i_{n-1},i_n}H_{i_n}(0)\hat{H}_{i_{n-1}}(s_{n-1})
\end{align*}
letting \(s_{n-1}\rightarrow 0\) leads to
 \begin{align*}
  & \lambda_{i_1,i_n}(H_{i_n}(r_2)-H_{i_n}(0))(\hat{H}_{i_1}(r_1+s_1)-\hat{H}_{i_1}(r_1))=\\
  &\lambda_{i_n,i_1}(H_{i_1}(r_1+s_1)-H_{i_1}(r_1))\hat{H}_{i_n}(r_2)
 \end{align*} as \(i_1,i_n,r_2,s_1,r_1\) are arbitrary, divide the formula by \(r_2s_1\) and let \(r_2,s_1\) go to zero leads to
\[\lambda_{i_1,i_n}H'_{i_n}(0)\hat{H}'_{i_1}(r_1)=\lambda_{i_n,i_1}H'_{i_1}(r_1)\hat{H}'_{i_n}(0),\] finally note that \(\hat{H}'_i=1/H_i\), thus \(\lambda_{i_1,i_n}(H_{i_n}^2)'(0)=\lambda_{i_n,i_1}(H_{i_1}^2)'(r_1)\).
\end{proof}

\begin{lem}
  For all \(i\sim j\), let \(W_{i,j}=\lambda_{i,j}A_j/2=\lambda_{j,i}A_i/2\), there exists constant \(\varphi_j\) depends only on \(j\), such that \(f_{i,j}(x)=W_{i,j}x+\varphi_j\).
\end{lem}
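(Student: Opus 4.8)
The plan is to combine the previous lemmas into an explicit functional form for $f_{i,j}$. From Lemma \ref{lem1} I have $f_{i,j}(x)=\lambda_{i,j}h_j'(x)$ for all $x\geq 0$, so everything reduces to determining $h_j'$, equivalently the function $H_j=h_j'\circ h_j^{-1}$ introduced in Lemma \ref{lem3}. The key input is the ODE $(H_i^2)'=A_i$ from Lemma \ref{lem3}, which says $H_i^2$ is an affine function of its argument; integrating gives $H_i(u)^2=A_i u+H_i(0)^2$, so $H_i(u)=\sqrt{A_i u+B_i}$ with $B_i=H_i(0)^2=h_i'(0)^2$.

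First I would unwind the definition $H_j=h_j'\circ h_j^{-1}$ to turn this into a statement about $f_{i,j}$ directly. Writing $x=h_j^{-1}(u)$, i.e. $u=h_j(x)$, Lemma \ref{lem1} reads $f_{i,j}(x)=\lambda_{i,j}h_j'(x)=\lambda_{i,j}H_j(h_j(x))=\lambda_{i,j}H_j(u)$. Substituting the solved form of $H_j$ gives
\[
f_{i,j}(x)=\lambda_{i,j}\sqrt{A_j h_j(x)+B_j}.
\]
So the remaining task is to identify $h_j(x)$ explicitly. For this I would again use $H_j=h_j'\circ h_j^{-1}$ together with $H_j(u)=\sqrt{A_j u+B_j}$: this is a separable ODE for $h_j$. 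Indeed $h_j'(x)=H_j(h_j(x))=\sqrt{A_j h_j(x)+B_j}$, which separates as $dh_j/\sqrt{A_j h_j+B_j}=dx$; integrating and using $h_j(0)=0$ yields $\sqrt{A_j h_j(x)+B_j}=\tfrac{A_j}{2}x+\sqrt{B_j}$, hence $h_j'(x)=H_j(h_j(x))=\tfrac{A_j}{2}x+\sqrt{B_j}$.

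Plugging this back into Lemma \ref{lem1} gives the claimed affine form:
\[
f_{i,j}(x)=\lambda_{i,j}h_j'(x)=\lambda_{i,j}\Bigl(\tfrac{A_j}{2}x+\sqrt{B_j}\Bigr)=\tfrac{\lambda_{i,j}A_j}{2}\,x+\lambda_{i,j}\sqrt{B_j}.
\]
Setting $W_{i,j}=\lambda_{i,j}A_j/2$ (which equals $\lambda_{j,i}A_i/2$ by the reversibility relation $\lambda_{i,j}A_j=\lambda_{j,i}A_i$ in Lemma \ref{lem3}) and $D_j=\lambda_{i,j}\sqrt{B_j}$, one obtains $f_{i,j}(x)=W_{i,j}x+D_j$. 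The only subtlety is to check $D_j$ genuinely depends only on $j$ and not on $i$: since $B_j=h_j'(0)^2$ depends only on $j$, I must verify that $\lambda_{i,j}\sqrt{B_j}$ is $i$-independent, which follows from evaluating at $x=0$, namely $D_j=f_{i,j}(0)=\lambda_{i,j}h_j'(0)$, and comparing across neighbors using the cycle/strong-connectivity relations already exploited in Lemma \ref{lem3}.

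The main obstacle is the passage from the constant-of-motion statement $(H_i^2)'=A_i$ to the affine form of $h_i'$ itself: it requires correctly solving the separable ODE $h_j'=\sqrt{A_j h_j+B_j}$ with the initial condition $h_j(0)=0$ and confirming the sign of the square root (positive, since $h_j$ is an increasing homeomorphism). A secondary technical point is ensuring $A_j\geq 0$ so that $H_j$ is real and the resulting $f_{i,j}$ is nonnegative on all of $\mathbb{R}^+$; this should follow from $h_j$ being a diffeomorphism of $\mathbb{R}^+$ with $h_j'>0$, forcing $H_j^2$ to be increasing and hence $A_j>0$ unless $h_j$ is linear, which is the degenerate VRJP-consistent case anyway.
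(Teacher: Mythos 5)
Your computation follows the paper's route almost exactly: both you and the paper integrate $(H_j^2)'=A_j$ to get $H_j(s)=\sqrt{A_js+B_j}$, combine this with Lemma \ref{lem1}, and recover the affine form of $h_j'$. The only cosmetic difference is that the paper integrates $(h_j^{-1})'(s)=1/H_j(s)$ to get $h_j^{-1}(s)=\frac{2}{A_j}\sqrt{A_js+B_j}+C_j$ and then substitutes, whereas you solve the equivalent separable ODE $h_j'=\sqrt{A_jh_j+B_j}$ with $h_j(0)=0$; your intermediate formula $f_{i,j}(x)=W_{i,j}x+\lambda_{i,j}\sqrt{B_j}$ is correct, as are your side remarks on the sign of the root and on $A_j\geq 0$.

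The gap is in your final step. The claim that $\lambda_{i,j}\sqrt{B_j}=f_{i,j}(0)$ is independent of $i$ is false in general, and no cycle or strong-connectivity relation will rescue it: the only constraint Lemma \ref{lem3} imposes is $\lambda_{i,j}A_j=\lambda_{j,i}A_i$, i.e.\ the symmetry $W_{i,j}=W_{j,i}$, and the VRJP itself is a counterexample to your claim, since there $f_{i,j}(x)=W_{i,j}(1+x)$ gives $f_{i,j}(0)=W_{i,j}$, which depends on the edge $\{i,j\}$ and not only on $j$. What is actually true --- and what your own formula already exhibits --- is that the constant term factors through $W_{i,j}$: since $\lambda_{i,j}=2W_{i,j}/A_j$, you have $f_{i,j}(x)=W_{i,j}\bigl(x+\tfrac{2\sqrt{B_j}}{A_j}\bigr)$, so the quantity depending only on $j$ is the ratio $D_j:=2\sqrt{B_j}/A_j$ of the constant term to the slope, not the constant term itself. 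This is how the lemma must be read; the paper's own wording is loose in exactly the same way (its computation produces $f_{i,j}(x)=W_{i,j}(x-C_j)$ with $C_j$ depending only on $j$, then calls the constant term ``$D_j$''), and the multiplicative form is precisely what the subsequent per-vertex time change $l_i\mapsto l_i/D_i$ needs in order to turn the rate into $W_{i,j}D_iD_j(1+T_j(t))$, a genuine VRJP. So keep your computation, delete the false $i$-independence claim, and state the conclusion as $f_{i,j}(x)=W_{i,j}(x+D_j)$ with $D_j=2\sqrt{B_j}/A_j$.
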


\begin{proof}
As \((H_j^2(s))'=A_j\), there exists \(B_j\) such that \(H_j^2(s)=A_j s + B_j\), therefore
\[f_{i,j}\circ h_j^{-1}(s)=\lambda_{i,j}H_j(s)=\lambda_{i,j}\sqrt{A_js+B_j}.\]
On the other hand, \((h^{-1}_j)'(s)=\frac{1}{\sqrt{A_js+B_j}}\), thus for some \(C_j\),
\[h_j^{-1}(s)=\frac{2}{A_j}\sqrt{A_js+B_j}+C_j.\]
\(f_{i,j}(h_j^{-1}(s))=f_{i,j}(\frac{2}{A_j}\sqrt{A_js+B_j}+C_j)=\lambda_{i,j}\sqrt{A_js+B_j}\), which leads to
\[f_{i,j}(x)=W_{i,j}x+\varphi_j,\] where \(\varphi_j\) is some constant depends only on \(j\).
Applying the time change
\[\tilde{D}(s)=\sum_i \frac{l_i(s)-\varphi_i}{\varphi_i},\] the resulting process will be of jump rate \[W_{i,j}\varphi_i\varphi_j(1+T_j(t))\] where \(T_j(t)\) is the local time for the time changed process \(Z_t=X_{\tilde{D}^{-1}(t)}\).
\end{proof}

\noindent{\bf Acknowledgments:}
I would like to thank Christophe Sabot for his constant support in this project. I would like to thank an anonymous referee for carefully reading the paper and providing corrections.

\nocite{*}
\bibliography{bibi}{}
\bibliographystyle{plain}
\end{document}